\newcommand{\rvline}{\hspace*{-\arraycolsep}\vline\hspace*{-\arraycolsep}}
\newtheorem{theorem}{Theorem}[section]
\newtheorem{definition}[theorem]{Definition}
\newtheorem{lemma}[theorem]{Lemma}
\newtheorem{proposition}[theorem]{Proposition}
\newtheorem{remark}[theorem]{Remark}
\newcommand\Q{\mathbb{Q}}
\newcommand{\la}{\langle}
\newcommand{\ra}{\rangle}
\newcommand{\malpha}{\mathrm{M}_{\alpha}}
\newcommand{\mbalpha}{\overline{\mathrm{M}}_{\alpha}}
\begin{document}
\begin{center}
{ 
       {\Large \textbf{\sc Real Bott manifold structure of $n$-dimensional Klein bottle and its rational Betti numbers }}
\\

\medskip

 {\sc Navnath Daundkar\footnote{Corresponding author}}\\
{\footnotesize Indian Institute of Science Education and Research Pune, India}\\

{\footnotesize e-mail: {\it navnath.daundkar@acads.iiserpune.ac.in}}}\\

 {\sc Priyavrat Deshpande}\\
{\footnotesize Chennai Mathematical Institute, India}\\

{\footnotesize e-mail: {\it pdeshpande@cmi.ac.in}}\\

\end{center}

\medskip

\begin{center}
  {\sc Abstract}\\
\end{center}
Donald Davis initiated the study of an $n$-dimensional analogue of the Klein bottle. This generalized Klein bottle occurs as a moduli space of planar polygons for a certain choice of side lengths.
In this paper, we show that the $n$-dimensional Klein bottle is a real Bott manifold and determine the corresponding Bott matrix. We determine the small cover structure on two other classes of moduli spaces of planar polygons. As an application, we compute the rational Betti numbers of these spaces using a formula, due to Suciu and Trevisan.

\hrulefill

{\small \textbf{Keywords:} {Planar polygon spaces, n-dimensional Klein bottle, Real Bott manifolds, Betti numbers, small covers}}

\indent {\small {\bf 2020 Mathematics Subject Classification:}} {{55M30, 14M25, 55P15, 57R42.}}
\vspace{0.3cm}

\section{Introduction}
The \emph{moduli space of planar polygons} (or \emph{planar polygon space}) associated with a \emph{length vector} $\alpha:=(\alpha_{1},\dots, \alpha_{n+3})$, denoted by $\overline{\mathrm{M}}_{\alpha}$, is the collection of all closed piecewise linear paths with side lengths $\alpha_{1}, \alpha_{2},\dots, \alpha_{n+3}$ in the plane viewed up to all isometries. 
In other words, 
\[\overline{\mathrm{M}}_{\alpha}\coloneqq \big\{(v_{1},v_{2},\dots,v_{n+3})\in (S^{1})^{n+3} : \displaystyle\sum_{i=1}^{n+3}\alpha_{i}v_{i} = 0 \big\}\big/\mathrm{O}_{2},\]
where $S^{1}$ is the unit circle and the group of isometries $\mathrm{O}_{2}$ acts diagonally. 
If we consider the collection of closed piecewise linear paths in the plane upto orientation preserving isometries (i.e. $\mathrm{SO}_2$), then the corresponding quotient space is denoted by $\malpha$. Note that $\malpha$ is a double cover of $\mbalpha$.
It was shown in \cite[Theorem 1.3]{FAR} that, if we choose a length vector $\alpha$ such that $\sum_{i=1}^{n+3}\pm \alpha_{i} \neq 0$ then  $\overline{\mathrm{M}}_{\alpha}$ and $\malpha$ are closed, smooth manifolds of dimension $n$. Such length vectors are called \emph{generic} length vectors. 

Various topological aspects of these spaces have been studied. For example,
in \cite{Kamiyamahomology1}, Kamiyama and Tezuka  proved that for a length vector $\alpha=(1,\dots,1,r)$, the integral homology of $\malpha$ is torsion-free and computed the Betti numbers. 
Expressing various topological invariants of planar polygon spaces in terms of combinatorial data associated with the corresponding length vector is one of the important tasks in this area. 
Farber and Sch\"{u}tz \cite{FS} showed that for an arbitrary length vector, the integral homology groups of $\mathrm{M}_{\alpha}$ are torsion-free and also described the Betti numbers of $\malpha$ in terms of the combinatorial data associated with the length vector, called short subsets. 

\begin{definition}
Let $\alpha=(\alpha_{1},\alpha_{2},\dots,\alpha_{n+3})$ be a length vector.
A subset $I\subset [n+3]$ is short  if 
\[\sum_{i\in I} \alpha_i  < \sum_{j \not \in I} \alpha_j.\]  A subset is long if its complement is short.  
\end{definition}
 
In general, the collection of short subsets may be very large. 
Hence, there is another combinatorial object that efficiently encodes the information about all short subsets (it was introduced by Hausmann \cite[Section 1.5]{hausmann2007geometric}). 
Since the diffeomorphism type of a planar polygon space does not depend on the ordering of the side lengths, we assume $\alpha_{1}\leq \alpha_{2} \leq \dots\leq \alpha_{n+3}$.
For a (generic) length vector $\alpha$, consider the following collection of subsets of $[n+3]$:
\[ S_{n+3}(\alpha) = \{J\subset [n+3] : \text{ $n+3\in J$ and $J$ is short}\}. \]
There is a partial order on these subsets given by $I\leq J$ iff $I=\{i_{1},\dots,i_{t}\}$ and $\{j_{i},\dots,j_{t}\}\subseteq J$ with $i_{s}\leq j_{s}$ for $1\leq s\leq t$. 
\begin{definition}\label{gc}
The genetic code of a length vector $\alpha$ is the set of maximal elements of $S_{n+3}(\alpha)$ with respect to the above partial order. 
The maximal elements are called genes.
\end{definition}

If $A_{1}, A_{2},\dots, A_{k}$  are the maximal elements of $S_{n+3}(\alpha)$ with respect to $\leq$ then the genetic code of $\alpha$ is denoted by $\la A_{1},\dots,A_{k}\ra$.
It follows from \cite[Lemma 1.2]{hausmann2007geometric} that the genetic code of a length vector $\alpha$ determines the diffeomorphism type of the manifold $\malpha$.
For the length vector $\alpha=(1,1,\dots,1)$, Kamiyama \cite{Kamiyamahomology2} determined the homology groups $H_{\ast}(\overline{\mathrm{M}}_{\alpha},\mathbb{Z}_{p})$ for odd primes and $H_{\ast}(\overline{\mathrm{M}}_{\alpha},\mathbb{Q})$. 
Hausmann and Knutson \cite{HK1} computed the $\mathbb{Z}_{2}$-cohomology ring of $\overline{\mathrm{M}}_{\alpha}$ and showed that it can be completely determined by the genetic code of the length vector (see \cite[Theorem 2.1]{Daviscring} for more detailed expression).

In this paper, we are interested in studying a specific class of moduli spaces of planar polygons through the lens of toric topology.
Such attempts previously have occurred in a paper by Hausmann and Knutson \cite{HK1} to compute the mod-$2$ cohomology ring of planar polygon spaces.
They also showed in \cite[Section 6]{polspacesGrassmann} that $2$ and $3$-dimensional spatial polygon spaces are, in fact, toric manifolds. Hausmann and Rodriguez \cite[Proposition 6.8]{Hausmann-Rodriguez} obtained a sufficient condition for spatial polygon spaces to be toric manifolds.

One of our aims is to determine the rational Betti numbers of this class using tools from toric topology. This class of planar polygon spaces generalizes the classical Klein bottle.  
An $n$-dimensional analogue of the Klein bottle, denoted $K_{n}$, was introduced by Davis \cite{zbMATH07134923} as follows:
\begin{equation}\label{eq}
     K_{n}=\frac{(S^{1})^{n}}{(z_{1},\dots,z_{n-1},z_{n})\sim( \bar{z}_{1},\dots,\bar{z}_{n-1},-z_{n})}.
\end{equation}
The circle $S^1$ is considered as the unit circle in $\mathbb{C}$ and $\bar{z}$ is the complex conjugate. 
It is easy to see that $K_2$ is the Klein bottle. Donald Davis computed various invariants of the $n$-dimensional Klein bottle. For example, the fundamental group, integral cohomology algebra and the stable homotopy type of $K_{n}$ have been computed. We note that the Betti numbers of $K_n$ can be obtained using \cite[Theorem 2.6]{zbMATH07134923}. 

The following result is an immediate consequence of \cite[Proposition 2.1]{hausmann2007geometric} that justifies the connection with planar polygon spaces. 
\begin{theorem}
Let $\alpha$ be a length vector with the genetic code  $\la\{1,2,\dots,n-1,n+3\}\ra$. Then 
$\overline{\mathrm{M}}_{\alpha}\cong K_n.$
\end{theorem}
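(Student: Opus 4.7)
The plan is to apply Hausmann's Proposition~2.1 from \cite{hausmann2007geometric}, which gives a concrete model for the polygon space $\overline{\mathrm{M}}_\alpha$ as a quotient of a torus determined by the combinatorics of its genetic code. The goal is to show that for the code $\la\{1,2,\dots,n-1,n+3\}\ra$, this quotient matches the defining presentation of $K_n$ in \eqref{eq}.

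First, I would carry out the standard reduction: use the $\mathrm{SO}_2$ component of the $\mathrm{O}_2$-action to normalize $v_{n+3}=1$, thereby replacing the quotient by $\mathrm{O}_2$ with a quotient by the residual $\mathbb{Z}/2$ that acts by simultaneous complex conjugation on $(v_1,\dots,v_{n+2})$, subject to the closure condition $\sum_{i=1}^{n+2}\alpha_i v_i = -\alpha_{n+3}$. The genetic-code hypothesis gives the strict inequality $\alpha_{n+3} + \sum_{i=1}^{n-1}\alpha_i < \alpha_n+\alpha_{n+1}+\alpha_{n+2}$, and the maximality of the gene provides the complementary inequalities saying that adjoining any single index from $\{n,n+1,n+2\}$ to $\{1,\dots,n-1,n+3\}$ produces a long subset. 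Together these guarantee that for every $(v_1,\dots,v_{n-1})\in (S^1)^{n-1}$, the three remaining edges of lengths $\alpha_n,\alpha_{n+1},\alpha_{n+2}$ close the leftover vector in a one-parameter family---a circle without degeneration. Verifying that this circle bundle over $(S^1)^{n-1}$ is trivial then exhibits the pre-quotient space as $(S^1)^n$.

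The main obstacle is identifying the induced $\mathbb{Z}/2$-action in these new coordinates. On the $n-1$ coordinates coming from $(v_1,\dots,v_{n-1})$ the action is plainly complex conjugation, matching the first $n-1$ slots of the involution in \eqref{eq}. On the fiber coordinate, however, one is parameterizing an \emph{oriented} triangle closure, and complex conjugation swaps the two orientations; the task is to choose a fiber parameterization that absorbs the orientation flip together with a half-turn shift accounting for the asymmetry between the two triangle orientations, so that the residual action becomes $z_n\mapsto -z_n$ rather than $z_n\mapsto \bar z_n$. Once this equivariant identification is in place, the quotient is exactly the Klein manifold of \eqref{eq}, yielding the diffeomorphism $\overline{\mathrm{M}}_\alpha\cong K_n$.
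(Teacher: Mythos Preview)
Your approach is essentially the same as the paper's: both invoke \cite[Proposition~2.1]{hausmann2007geometric} as the key input. The paper in fact gives no further argument---it simply declares the theorem an immediate consequence of that proposition---whereas you sketch the mechanism behind it (normalizing $v_{n+3}$, reading off the triangle-closure fiber from the genetic-code inequalities, and matching the residual $\mathbb{Z}/2$-action with the defining involution of $K_n$). Your outline is sound, though the two steps you flag as needing work (triviality of the circle bundle and the precise fiber parameterization making the action $z_n\mapsto -z_n$) are exactly what Hausmann's proposition packages for you, so in a write-up you can simply cite it rather than redo those computations.
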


In \cite{daundkar2021moment}, we have studied the question of classifying aspherical planar polygon spaces. More precisely, we classified aspherical chain spaces (fixed points of a certain involution on abelian polygon spaces) and the classification is completely determined by the combinatorial data associated with the length vector, called a short code (see \cite[Definition 3.1] {daundkar2021moment}). We note that the definition of short code is inspired by the genetic code.  As observed in \cite{HK1}, and also follows from \cite[Proposition 3.3]{daundkar2021moment}, chain spaces form a subclass of planar polygon spaces. Since these spaces possess a real toric manifold structure, we could use tools from toric topology to classify aspherical chain spaces. In this paper, we focus on computing rational Betti numbers of $n$-dimensional Klein bottle and two other classes of planar polygon spaces.

Real Bott manifolds are an important class of manifolds in toric topology. One of our results (see \Cref{rbm}) shows that $K_n$ is a real Bott manifold, and this helps us to make use of Suciu and Trevisan's (\Cref{thm:ST}) formula to compute the rational Betti numbers of $K_n$.  We would like to mention that the rational Betti numbers of $n$-dimensional Klein bottle can also be computed using Donald Davis's result \cite[Theorem 2.6]{zbMATH07134923}. On the other hand, our computations use the tools from toric topology.
Using real Bott manifold structure, we conclude that $K_n$ can never be cohomologically symplectic (see \Cref{prop:cohosymplect}, and we obtain the description of its rational cohomology ring.

The article is organized as follows: In \Cref{sec2}, we recall some basic notions from toric topology related to small covers and real toric manifolds. We show that $n$-dimensional Klein bottle is a real toric manifold and compute the corresponding Bott matrix (see \Cref{rbm}). We then use the Suciu-Trevisan formula to compute rational Betti numbers of $K_n$ (see \Cref{thm:QBettiKn}). In \Cref{sec3}, we obtain the small cover structure on moduli spaces of planar polygons corresponding to the genetic codes $\la\{1,2,\dots,n-2,n,n+3\}\ra$ and $\la\{1,2,\dots,n-2,n+1,n+3\}\ra$ respectively (see \Cref{thm:small-cover-sructure-P5I} and \Cref{thm:small-cover-sructure-P6I}). As an application, we compute their rational Betti numbers (see \Cref{thm:QbettiP5I} and \Cref{thm:QbettiP6I}).

\section{Realizing \texorpdfstring{$K_{n}$}{Lg} as the real Bott manifold and its rational Betti numbers \label{sec2}}

In this section, we show that $K_{n}$ is a real Bott manifold and determine the corresponding Bott matrix.
Since real Bott manifolds are also examples of small covers (topological analogues of real toric varieties), we begin the section by defining characteristic functions.
Then, we define such a function on the $n$-dimensional cube and show that the corresponding small cover is homeomorphic to the $n$-dimensional Klein bottle.

Let $P$ be an $n$-dimensional simple polytope and  $\mathcal{F} = \{F_1,\dots, F_m\}$ be the collection of its facets. 

\begin{definition}
A function $\chi:\mathcal{F}\to \mathbb{Z}_2^n$ is called characteristic for $P$ if for each vertex $v= F_{i_1}\cap\cdots\cap F_{i_n}$, the $n\times n$ matrix whose columns are $\chi(F_{i_1}),\dots,\chi(F_{i_n})$ is invertible.
Equivalently, we can think of the characteristic function as an $n\times m$-matrix of $0$'s and $1$'s \
\[ \chi =
\begin{bmatrix}
  \begin{matrix}
  \chi(F_{1}) & \chi(F_{2}) & \dots & \chi(F_{n})
  \end{matrix}
\end{bmatrix}
\] with the above property satisfied. 
\end{definition}

An $n$-dimensional \emph{small cover} $M$ is a closed, smooth manifold with an action of $\mathbb{Z}_2^n$ that is locally isomorphic to the standard action of $\mathbb{Z}_2^n$ on $\mathbb{R}^n$ and such that the orbit space is an $n$-dimensional simple polytope $P$.
These manifolds are topological generalizations of real toric varieties. 
M. Davis and T. Januszkiewicz, in their seminal work, showed how to build a small cover from the quotient polytope (see \cite[Section 1.5]{zbMATH04212906} for details). 
Their result states that the manifold has a regular cell structure consisting of $2^n$ copies of the quotient polytope as the top-dimensional cells. 
Here is a brief description.
Given a pair $(P, \chi)$ of a simple polytope and a characteristic function defined on its facets, the corresponding small cover $X(P, \chi)$ is constructed as follows:
\[X(P, \chi) := \left((\mathbb{Z}_2)^n\times P\right)/ \{(t,p) \sim (u,q) \}\quad \text{if~} p = q \text{~and~} t^{-1}u \in \mathrm{stab}(F_q)\]
where $F_q$ is the unique face of $P$ containing $q$ in its relative interior. 
\smallskip

The $n$-dimensional cube is given by 
\[I^{n}=[-1,1]^n=\{(x_{1},\dots,x_{n})\in \mathbb{R}^n : -1\leq x_{i}\leq 1 \text{ for } 1\leq i\leq n\}.\]
Consider the following labelling of the facets of $I^{n}$. 
For each  $1\leq i \leq n$, we set 
\[F_{i}=I \times \dots \times \{-1\}\times \dots \times I ~~ \text{and}  ~~ F_{n+i}=I \times \dots \times \{1\}\times \dots \times I, \] where $\{-1\}$  and $\{1\}$ is at the $i$-th position.
Define 
\begin{equation}\label{eq:charforKleinbottle}
\chi(F) = \begin{cases}
e_{i} & \text{if $F=F_{i}$ or $F=F_{n+i}$, $2\leq i \leq n$ ,}\\
e_{1} & \text{if $F=F_{1}$,}\\
\sum_{i=1}^{n}e_{i} & \text{if $F=F_{n+1}$}.\\
\end{cases}    
\end{equation}

One can note that the $n\times 2n$-matrix of $\chi$ can be given as 
\begin{equation}\label{eq2}
    \chi = 
\begin{bmatrix}
1 & 0 & \cdots & 0 & 1 & 0 & \cdots &0\\
0 & 1 & \cdots & 0 & 1  & 1 & \cdots &0\\
\vdots  & \vdots  & \ddots & \vdots & \vdots & \vdots & \ddots &0\\
0 & 0 & \cdots & 1 & 1 & 0 & \cdots & 1
\end{bmatrix}.
\end{equation}

\begin{lemma}
The function $\chi$ defined in \eqref{eq:charforKleinbottle} is a characteristic function for $\mathcal{F}(I^{n})$.
\end{lemma}
\begin{proof}
Let $v$ be the vertex of $I^{n}$. Consider the subcollection $\mathcal{F}(v)=\{F\in \mathcal{F}(I^{n}) : v\in F \}$ of facets of $\mathcal{F}(I^{n})$. 
Then,  the image of this subcollection under the characteristic function defined in \eqref{eq:charforKleinbottle} is given by the following expression 
\[\chi(\mathcal{F}(v)) = \begin{cases}
\{e_{2},\dots,e_{n},\sum_{i=1}^{n}e_{i}\}, & \text{if $v\in F_{n+1}$,}\\
\{e_{1},\dots,e_{n}\}, & \text{otherwise}.
\end{cases}\]
Clearly, in any case $\chi(\mathcal{F}(v))$ forms a basis for $\mathbb{Z}_{2}^{n}$. This concludes the proof.
\end{proof}

We follow \cite{zbMATH06673337} for basic information about the real Bott manifolds.
Given a strictly upper triangular binary matrix (i.e., a matrix whose entries are $0$ or $1$), a real Bott manifold can be constructed as the quotient of the $n$-dimensional torus by a free action of $\mathbb{Z}_{2}^{n}$. 

\begin{definition}
A binary square matrix $A$ is said to be a Bott matrix if there exists a permutation matrix $P$ and a strictly upper triangular binary matrix $B$
such that $A=PBP^{-1}$.
\end{definition}

Let $z\in S^1$ and $a\in \{0,1\}$. Define the notation
\[z(a) := \begin{cases}
z& \text{if $a=0$,}\\
\bar{z} & \text{if $a=1$}.
\end{cases}\]
Let $A^{i}_{j}$ be the $(i,j)$ entry of a Bott matrix $A$. 
For $1\leq i \leq n$ define the involution $a_{i}$ on $(S^{1})^{n}$ as follows:
\begin{equation}\label{bottino}
 a_{i}((z_{1},\dots,z_{n}))=(z_{1}(A^{i}_{1}),\dots, z_{i-1}(A^{i}_{i-1}),-z_{i},z_{i+1}(A^{i}_{i+1}),\dots, z_{n}(A^{i}_{n})).   
\end{equation}
Note that these involutions commute with each other and generate a multiplicative group $\mathbb{Z}_{2}^{n}$, which we denote by $G(A)$. 
Moreover, it can be observed that the action of $G(A)$ on $(S^{1})^{n}$ is free (see \cite[Lemma 2.1]{zbMATH06673337}).
\begin{definition}
A real Bott manifold associated with the Bott matrix $A$, denoted by $M(A)$, is defined as the quotient \[M(A):=\frac{(S^{1})^{n}}{G(A)}.\]    
\end{definition}

Recall that the $n$-dimensional real Bott manifolds are small covers over $n$-cube, and the corresponding characteristic function is determined by the Bott matrix. 
Let $B=[b_{i,j}]$ be the Bott matrix and $F_{1},\dots, F_{n},F_{n+1},\dots,F_{2n}$ are the facets of $I^n$.
Then, the corresponding characteristic function is:   
\begin{equation}\label{eq4}
 \chi(F) = \begin{cases}
e_{i} & \text{if $F=F_{i}$ for $1\leq i \leq n$,}\\
e_{i}+\sum_{k=i+1}^{n}b_{i,k}e_{k} & \text{if $F=F_{n+i}$ for $1\leq i \leq n-1$,}\\
e_{n} & \text{if $F=F_{2n}$}. 
\end{cases}  
\end{equation}
It can be seen that the matrix of this characteristic function is given by 
$
\begin{bmatrix}
  \begin{matrix}
  \mathbf{I}_{n} & \rvline &  \mathbf{I}_{n}+B^{T} 
  \end{matrix}
\end{bmatrix}$,
where $\mathbf{I}_{n}$ is the block of $n\times n$ identity matrix and $B^T$ is the transpose of $B$.
\begin{theorem}\label{rbm}
The $n$-dimensional Klein bottle $K_{n}$ is a real Bott manifold corresponding to the Bott matrix 
\begin{equation}\label{bottmat}
B = 
\begin{bmatrix}
0 & 1 & \cdots & 1 \\
0 & 0 & \cdots & 0 \\
\vdots  & \vdots  & \ddots & \vdots  \\
0 & 0 & \cdots & 0 
\end{bmatrix}.   
\end{equation}
In particular, $K_{n}$ is homeomorphic to the small cover $X(I^{n}, \chi)$, where $\chi$ is defined in the \eqref{eq:charforKleinbottle}.
\end{theorem}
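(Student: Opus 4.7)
The plan is to identify $K_n$ directly with the real Bott quotient $(S^1)^n/G(B)$, and then read the small cover realization off the general formula in \Cref{eq4}.

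First I would substitute $B$ from \Cref{bottmat} into \Cref{bottino}. Because every row of $B$ below the first is zero, the generators of $G(B)$ take a particularly simple form:
\[a_1(z_1,\dots,z_n) = (-z_1,\bar{z}_2,\dots,\bar{z}_n), \qquad a_i(z_1,\dots,z_n) = (z_1,\dots,-z_i,\dots,z_n) \text{ for } i \geq 2.\]
Next, I would break the quotient into two stages. Setting $H = \la a_2,\dots,a_n\ra \cong \mathbb{Z}_2^{n-1}$, the action of $H$ is by independent sign changes in the last $n-1$ coordinates, so the coordinate-wise squaring map $(z_1, z_2,\dots,z_n)\mapsto (z_1, z_2^2,\dots, z_n^2)$ descends to a homeomorphism $(S^1)^n/H \xrightarrow{\sim} (S^1)^n$. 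A brief check shows that $H$ is normal in $G(B)$, and that the residual involution induced by $a_1$ on this quotient torus is
\[(w_1,w_2,\dots,w_n) \longmapsto (-w_1, \bar{w}_2, \dots, \bar{w}_n).\]

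It then remains to compare this with the defining involution of $K_n$ in \Cref{eq}. The two actions agree after reversing the order of coordinates via $w_i \leftrightarrow w_{n+1-i}$, which is an equivariant self-homeomorphism of $(S^1)^n$. Passing to quotients yields $K_n \cong (S^1)^n/G(B)$, which is the first assertion. For the small cover claim, I would substitute $B$ into \Cref{eq4}: the only nonzero off-diagonal entries $b_{1,k}=1$ for $k \geq 2$ force $\chi(F_{n+1}) = e_1 + \cdots + e_n$ and $\chi(F_{n+i}) = e_i$ for $i \geq 2$, reproducing \Cref{eq2} column by column.

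The main conceptual point---and where one might stumble---is recognizing that the Bott matrix in \Cref{bottmat} has all its nontrivial structure concentrated in the first row, so that the subgroup $H$ acts in a purely ``coordinate-wise'' manner and can be collapsed by squaring. Once this observation is in hand, the remaining work is the routine tracking of how the single remaining involution $a_1$ transforms under the squaring change of variables, and the cosmetic coordinate reversal that aligns it with the convention used in \Cref{eq}.
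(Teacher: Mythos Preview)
Your proposal is correct and follows essentially the same route as the paper: both compute the generators $a_i$ from $B$, quotient first by $\langle a_2,\dots,a_n\rangle$ (the paper phrases this as passing to $S^1\times(\mathbb{R}P^1)^{n-1}$, you phrase it via the squaring map), and then match the residual $\mathbb{Z}_2$--action to the defining involution of $K_n$. Your write-up is slightly more explicit about the two identifications---the squaring map $(S^1)^n/H\cong (S^1)^n$ and the coordinate reversal needed to align with \Cref{eq}---and about the small cover verification, but structurally there is no real difference.
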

\begin{proof}
By the quotient construction, we have 
$M(B)= \frac{(S^{1})^{n}}{G(B)}$, where $G(B)=<a_{1},\dots, a_{n}>$, $a_{1}((z_{1},z_{2},\dots,z_{n}))=(-z_{1},\bar{z}_{2},\dots,\bar{z}_{n})$, and 
$a_{i}((z_{1},\dots,z_{n}))= (z_{1},\dots, -z_{i},\dots,z_{n})$, for $2\leq i\leq n.$
Hence, \[M(B)\cong \frac{S^{1}\times S^{n-1}}{<a_{1}>\times <a_{2},\dots,a_{n}>}.\]
Equivalently, $M(B)\cong S^{1}\times_{\mathbb{Z}_{2}} (\mathbb{R}P^{1})^{n-1},$
where the action of $\mathbb{Z}_{2}$ is given by an involution  $a_{1}((z_{1},[z_{2},\dots,z_{n}]))=(-z_{1},[\bar{z}_{2},\dots,\bar{z}_{n}])$. 
Consequently, $M(B)$ is homeomorphic to $K_{n}$.

In the case of $K_n$, the characteristic matrix given by \eqref{eq2} coincides with the characteristic matrix generated by the Bott matrix $B$. Thus, $K_{n}$ is the small cover $X(I^{n},\chi)$.
\end{proof}

We recall that the mod-$2$ Betti numbers of small covers were computed by M. Davis and T. Jaunuszkeiwicz in \cite[Theorem 3.1]{zbMATH04212906}. 
Ishida \cite{zbMATH05935795}, gave a formula for rational Betti numbers of real Bott manifolds. 
Let $A=[A^{i}_{j}]$ be a Bott matrix and $M(A)$ be the corresponding real Bott manifold
of $M(A)$. 

\begin{theorem}[{\cite[Lemma 2.1]{zbMATH05935795}}\label{Qbettofbottmfds}]
Let $\beta_{i}(M(A),\mathbb{Q})$ be the $i$-th rational Betti number of $M(A)$ and $A_j$ denotes the $j$-th column of $A$. Then
\[\beta_{i}(M(A),\mathbb{Q})=|\{J\subseteq [n] : |J|=i \text{ and } \sum_{j\in J}A_{j}=0\}|.\]
\end{theorem}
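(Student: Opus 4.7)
The plan is to exploit the quotient description $M(A) = (S^1)^n / G(A)$ given by \Cref{bottino}. Since $G(A) \cong \mathbb{Z}_2^n$ is a finite group acting freely on the torus $(S^1)^n$, the quotient map is a finite-sheeted covering, and the transfer argument gives
\[ H^\ast(M(A); \mathbb{Q}) \;\cong\; H^\ast((S^1)^n; \mathbb{Q})^{G(A)}. \]
So the task reduces to counting $G(A)$-invariants in the exterior algebra $H^\ast((S^1)^n; \mathbb{Q}) = \Lambda_{\mathbb{Q}}(x_1, \ldots, x_n)$, where $x_j$ is the degree-one generator pulled back from the $j$th $S^1$ factor.

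Next, I would determine the action of each generator $a_i$ on $H^1$. Two facts about the circle are needed: complex conjugation $z \mapsto \bar z$ is orientation-reversing and hence acts as $-1$ on $H^1(S^1;\mathbb{Q})$, while the antipodal map $z \mapsto -z$ is homotopic to the identity on $S^1$ and therefore acts trivially on $H^1(S^1;\mathbb{Q})$. Reading off \Cref{bottino}, and recalling that for a Bott matrix $A = PBP^{-1}$ the diagonal entries $A^i_i$ vanish (since permutation conjugation preserves zero diagonals of strictly upper triangular matrices), I get
\[ a_i^\ast(x_j) \;=\; (-1)^{A^i_j}\, x_j \qquad \text{for every } i,j \in [n]. \]
This extends multiplicatively: for $J = \{j_1 < \cdots < j_k\} \subseteq [n]$, writing $x_J = x_{j_1} \wedge \cdots \wedge x_{j_k}$, one has
\[ a_i^\ast(x_J) \;=\; (-1)^{\sum_{j \in J} A^i_j}\, x_J. \]

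The class $x_J$ is $G(A)$-invariant if and only if $\sum_{j \in J} A^i_j \equiv 0 \pmod 2$ for every $i \in [n]$, equivalently $\sum_{j\in J} A_j = 0$ in $\mathbb{Z}_2^n$. Since the $x_J$ with $|J|=i$ form a $\mathbb{Q}$-basis of $H^i((S^1)^n;\mathbb{Q})$ that is permuted (up to sign) by $G(A)$, the invariants are spanned by those $x_J$ satisfying this linear condition, and the invariant subspace has dimension exactly $\#\{J \subseteq [n] : |J| = i,\ \sum_{j \in J} A_j = 0\}$, as claimed.

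I expect the computation of the action on cohomology (step two above) to be the only genuinely delicate point: one must carefully separate the effect of the sign flip $-z_i$ on the $i$th coordinate (which is homotopically trivial) from the conjugations applied on the remaining coordinates (which contribute the actual signs). Once that is settled, the identification of the invariant subspace with the prescribed set of subsets $J$ is purely formal.
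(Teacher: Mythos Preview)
The paper does not give its own proof of this statement; it is quoted verbatim from Ishida \cite[Lemma 2.1]{zbMATH05935795} and used as a black box to deduce \Cref{Prop: Betti numbers Kn}. Your argument is correct and is in fact the standard one (and essentially Ishida's): identify $H^\ast(M(A);\mathbb{Q})$ with the $G(A)$-invariants of $H^\ast((S^1)^n;\mathbb{Q})$ via transfer, observe that each basis monomial $x_J$ is a simultaneous eigenvector for the $a_i$ with eigenvalue $(-1)^{\sum_{j\in J}A^i_j}$, and count the $J$'s for which all eigenvalues are $+1$. Your treatment of the diagonal entry---noting $A^i_i=0$ because permutation conjugation preserves the zero diagonal of a strictly upper triangular matrix, and that the antipodal map on $S^1$ is a rotation hence acts trivially on $H^1$---is exactly the delicate point, and you have it right.
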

The following result is a consequence of \Cref{Qbettofbottmfds}.
\begin{proposition}\label{Prop: Betti numbers Kn}
Let $\beta_{i}(K_{n},\mathbb{Q})$ be the $i$-th rational Betti number of $K_{n}$. Then \[\beta_{i}(K_{n},\mathbb{Q}) = \begin{cases}
	\binom{n-1}{i} & \text{if $i$ is even  }\\
    \binom{n-1}{i-1} & \text{if $i$ is odd}.\\
	\end{cases}\]
\end{proposition}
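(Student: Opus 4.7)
The plan is to apply Ishida's formula (the displayed theorem just above) directly to the Bott matrix $B$ from \Cref{bottmat}, and then reduce the count to a simple parity condition on the chosen index set.

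First I would record the columns of $B$ explicitly. By inspection of \Cref{bottmat}, the first column $B_1$ is the zero vector, while for each $j\in\{2,\dots,n\}$ the column $B_j$ equals $e_1\in \mathbb{Z}_2^n$. Consequently, for any $J\subseteq[n]$, the sum
\[
\sum_{j\in J} B_j \;=\; |J\cap\{2,\dots,n\}|\cdot e_1 \pmod{2},
\]
which vanishes if and only if $|J\cap\{2,\dots,n\}|$ is even. Ishida's formula therefore reduces to
\[
\beta_i(K_n,\mathbb{Q}) \;=\; \#\bigl\{J\subseteq [n] : |J|=i \text{ and } |J\cap\{2,\dots,n\}|\text{ is even}\bigr\}.
\]

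Next I would split into cases according to whether $1\in J$. If $1\notin J$ then $|J\cap\{2,\dots,n\}|=i$, so the parity condition forces $i$ to be even, and in that case any $i$-subset of $\{2,\dots,n\}$ works, giving $\binom{n-1}{i}$ possibilities. If $1\in J$ then $|J\cap\{2,\dots,n\}|=i-1$, so the parity condition forces $i$ to be odd, and in that case $J\setminus\{1\}$ is an arbitrary $(i-1)$-subset of $\{2,\dots,n\}$, giving $\binom{n-1}{i-1}$ possibilities. Collecting the two cases yields precisely the claimed formula.

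There is really no serious obstacle here: the whole argument is a two-case combinatorial count, and the only thing to be careful about is the mod $2$ interpretation of the column sums (the entries of $B$ live in $\mathbb{Z}_2$, so $B_j+B_k=0$ whenever $j,k\geq 2$). Once the special shape of $B$ in \Cref{bottmat} is used, the result is immediate.
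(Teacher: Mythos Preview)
Your argument is correct and follows essentially the same route as the paper: both apply Ishida's formula to the Bott matrix $B$ of \Cref{bottmat}, use that $B_1=0$ while $B_j=e_1$ for $j\geq 2$, and then split the count according to whether $1\in J$. The only difference is cosmetic---you write the column sum explicitly as $|J\cap\{2,\dots,n\}|\cdot e_1$, whereas the paper phrases the two cases directly in terms of the sets $\mathcal{S}_B(i)$.
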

\begin{proof}
Recall that $K_{n}\cong M(B)$ where $B$ is given by \Cref{bottmat}.  For each $i\in [n]$ consider the following collection of subsets of $[n]$ \[\mathcal{S}_{B}(i)= \{J\subseteq [n] : |J|=i \text{ and } \sum_{j\in J}B_{j}=0\} .\] Note that, if $i$ is even , then  $\mathcal{S}_{B}(i)=\{J\subseteq [n] : |J|=i \text{ and } 1\notin J\}$ and
if $i$ is odd  then $\mathcal{S}_{B}(i)=\{J\subseteq [n] : |J|=i-1 \text{ and } 1\in J\}.$
The proposition follows by counting the elements of $\mathcal{S}_{B}(i)$. 
\end{proof}

\begin{remark}
We make two observations at this point. 
\begin{enumerate}
    \item If $n$ is odd  then  $\beta_{n}(K_{n},\Q)=1$, i.e., $K_{n}$ is orientable.
    \item For all values of $n$, we have $\sum_{i=1}^{n}\beta_{i}(K_{n},\mathbb{Q})=2^{n-1}$.
\end{enumerate}
\end{remark}

In the remaining section, we will provide an alternative proof of \Cref{Prop: Betti numbers Kn} using a formula given by Suciu and Trevisan  \cite{ST}.
Let $P$ be an $n$-dimensional, simple polytope with $m$ facets and $K$ be the simplicial complex dual to the boundary $\partial(P)$.
Let $\chi$ be an $n\times m$ characteristic matrix of $P$ with entries from $\mathbb{Z}_{2}$.
For a subset $T\subseteq [n]$, define $\chi_{T}:=\sum_{i\in T}\chi_{i}$, where $\chi_{i}$ is the $i$-th row of $\chi$.
Let $K_{\chi,T}$ be the induced subcomplex of $K$ on the vertex set \[\mathrm{supp}(\chi_{T}) :=\{i\in [m] \mid \text{$i$-th entry of $\chi_{T}$ is nonzero} \}.\] 
\begin{theorem}[{\cite{ST, suciu2013rational}\label{thm:ST}}]
Let $\beta_{i}$ be the $i$-th rational Betti number of a small cover $X(P,\chi)$. 
Then \[\beta_{i}=\sum_{T\subseteq [n]}\Tilde{\beta}_{i-1}(K_{\chi, T},\mathbb{Q}),\]
where $\Tilde{\beta}_{i-1}(K_{\chi, T},\mathbb{Q})$ is the $(i-1)$-th reduced rational Betti number of $K_{\chi,T}$.
\end{theorem}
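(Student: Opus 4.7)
The plan is to realize the small cover $X(P,\chi)$ as the quotient of the real moment-angle complex $\mathcal{Z}_{K}^{\mathbb{R}}$ by the kernel of $\chi$, and then to read off its rational cohomology from a character-graded decomposition of $H^{\ast}(\mathcal{Z}_{K}^{\mathbb{R}};\mathbb{Q})$. Here $K$ is the dual simplicial complex of $\partial(P)$, and $\mathcal{Z}_{K}^{\mathbb{R}}$ is the polyhedral product obtained by gluing copies of $(D^{1},S^{0})$ along $K$, equipped with its natural coordinatewise $(\mathbb{Z}_{2})^{m}$-action by reflections.

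First I would establish the covering identification $X(P,\chi)\cong \mathcal{Z}_{K}^{\mathbb{R}}/N$, where $N=\ker\bigl(\chi\colon(\mathbb{Z}_{2})^{m}\to(\mathbb{Z}_{2})^{n}\bigr)$ is read off from the characteristic matrix. This is essentially the Davis-Januszkiewicz construction recalled in the paper: the characteristic condition at every vertex forces $N$ to act freely on $\mathcal{Z}_{K}^{\mathbb{R}}$, while the residual quotient $(\mathbb{Z}_{2})^{m}/N\cong (\mathbb{Z}_{2})^{n}$ is the $2$-torus acting on the small cover with orbit polytope $P$. Freeness of the $N$-action together with rational coefficients then yields
\[ H^{\ast}(X(P,\chi);\mathbb{Q})\cong H^{\ast}(\mathcal{Z}_{K}^{\mathbb{R}};\mathbb{Q})^{N}. \]

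Next I would invoke the well-known character-graded decomposition (due to Bahri-Bendersky-Cohen-Gitler, essentially Hochster/Cai)
\[ H^{k}(\mathcal{Z}_{K}^{\mathbb{R}};\mathbb{Q})\cong \bigoplus_{\omega\subseteq [m]}\widetilde{H}^{k-1}(K_{\omega};\mathbb{Q}), \]
in which the summand indexed by $\omega$ is the isotypic component for the sign character $\epsilon_{\omega}\colon(t_{1},\dots,t_{m})\mapsto \sum_{i\in\omega}t_{i}$. Taking $N$-invariants kills precisely the summands whose character is nontrivial on $N$, leaving only $\omega$ in the image of the transpose map $\chi^{\top}\colon(\mathbb{Z}_{2})^{n}\to (\mathbb{Z}_{2})^{m}$. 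I would then parametrize this image by subsets $T\subseteq [n]$ via $\omega=\operatorname{supp}(\chi_{T})$ -- an injective parametrization because $\chi$ has full row rank $n$ by the vertex condition -- and observe that $K_{\omega}$ is exactly the induced subcomplex $K_{\chi,T}$ of the statement. Summing reduced Betti numbers over the surviving characters then yields the claimed formula.

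The main obstacle is the character-graded form of the above decomposition: one must prove not merely a vector-space splitting but that the summand attached to $\omega$ really does transform under $(\mathbb{Z}_{2})^{m}$ via exactly the character $\epsilon_{\omega}$. I would handle this by working with an explicit $(\mathbb{Z}_{2})^{m}$-equivariant cellular cochain model of $\mathcal{Z}_{K}^{\mathbb{R}}$ -- written as a tensor product over the $m$ coordinates of reflection-equivariant complexes on $D^{1}$ -- and identifying the resulting bigraded complex, summand by summand, with the simplicial cochain complexes of the full subcomplexes $K_{\omega}$. Once that equivariance is in hand, the remainder of the argument reduces to linear algebra over $\mathbb{F}_{2}$ together with the standard transfer isomorphism for a free action of a finite group in characteristic zero.
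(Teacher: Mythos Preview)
The paper does not prove this statement at all; it is quoted verbatim as a result of Suciu and Trevisan \cite{ST} and used as a black box in the subsequent Betti-number computations. So there is no ``paper's own proof'' to compare against.

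That said, your outline is exactly the standard proof of the Suciu--Trevisan formula (and of its generalizations by Choi--Park and Cai): identify $X(P,\chi)$ with the quotient of the real moment-angle complex $\mathcal{Z}_K^{\mathbb{R}}$ by $N=\ker\chi$, use the transfer to get $H^*(X;\mathbb{Q})\cong H^*(\mathcal{Z}_K^{\mathbb{R}};\mathbb{Q})^N$, apply the equivariant Hochster/BBCG splitting of $H^*(\mathcal{Z}_K^{\mathbb{R}};\mathbb{Q})$ indexed by $\omega\subseteq[m]$ with character $\epsilon_\omega$, and observe that the surviving characters are precisely the rows of $\chi^{\top}$, i.e.\ the supports $\operatorname{supp}(\chi_T)$ for $T\subseteq[n]$. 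The injectivity of $T\mapsto\operatorname{supp}(\chi_T)$ via full row rank of $\chi$ is the right justification for the indexing. Your identification of the ``main obstacle'' is also accurate: the only nontrivial ingredient is the character-graded (not merely additive) form of the BBCG decomposition, and the cellular tensor-product model you describe is the usual way to see it.
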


\begin{lemma}\label{supp}\
Let $\chi$ be the characteristic function for $I^{n}$ defined in \eqref{eq:charforKleinbottle}. For $T\subseteq [n]$ we have
\[
|\mathrm{supp}(\chi_{T})|=\begin{cases}
2|T| & \text{if $|T|$ is even  and $1\notin T$}\\
2|T|-1 & \text{if $|T|$ is even  and $1\in T$}\\
2|T| & \text{if $|T|$ is odd  and $1\in T$}\\
2|T|+1 & \text{if $|T|$ is odd  and $1\notin T$.}\\
\end{cases}
\]
\end{lemma}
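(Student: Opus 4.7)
The plan is a direct, bookkeeping argument from the explicit matrix $\chi$ in \Cref{eq2}. I would first identify the support of each row of $\chi$, then sum rows over $T$ modulo $2$ column by column, and finally read off the four cases by tracking two separate parities: the parity of $|T|$ and whether $1 \in T$.

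To set up, observe from \Cref{eq2} that row $1$ of $\chi$ has its $1$'s exactly in columns $\{1, n+1\}$, while for $2 \le i \le n$ row $i$ has its $1$'s exactly in columns $\{i,\, n+1,\, n+i\}$. Equivalently, reading column by column: column $j$ for $1 \le j \le n$ has a single $1$, located in row $j$; column $n+1$ is the all-ones column; and column $n+i$ for $2 \le i \le n$ has a single $1$, located in row $i$. This is the content I would extract before doing any counting.

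Next I would compute $\chi_T = \sum_{i \in T} \chi_i$ entry-by-entry over $\mathbb{Z}_2$. For $1 \le j \le n$, the $j$-th entry of $\chi_T$ equals $1$ iff $j \in T$, contributing $|T|$ to $|\mathrm{supp}(\chi_T)|$. For $2 \le i \le n$, the $(n+i)$-th entry of $\chi_T$ equals $1$ iff $i \in T$, contributing $|T \cap \{2,\dots,n\}| = |T| - \mathbf{1}[1 \in T]$. For the column $n+1$, which is all ones, the entry is $1$ iff $|T|$ is odd, contributing $\mathbf{1}[|T| \text{ odd}]$. Summing these three contributions gives the compact formula
\[
|\mathrm{supp}(\chi_T)| = 2|T| + \mathbf{1}[|T| \text{ odd}] - \mathbf{1}[1 \in T].
\]

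From here the four cases of the lemma are immediate: $|T|$ even with $1 \notin T$ gives $2|T|$; $|T|$ even with $1 \in T$ gives $2|T|-1$; $|T|$ odd with $1 \in T$ gives $2|T|$; and $|T|$ odd with $1 \notin T$ gives $2|T|+1$. There is no real obstacle here — the only thing one has to be careful about is the asymmetric role of row $1$ (whose facet $F_{n+1}$'s label $\sum_i e_i$ makes column $n+1$ the unique all-ones column, while $F_1$ itself only contributes to column $1$), which is exactly what produces the dependence on whether $1 \in T$.
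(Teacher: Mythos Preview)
Your proof is correct and follows essentially the same approach as the paper: a direct bookkeeping argument based on the explicit entries of the matrix $\chi$ from \Cref{eq2}. The only difference is organizational --- the paper tracks the $1$'s row-by-row and then treats each of the four cases separately, whereas you compute the contribution column-by-column and collapse everything into the single formula $|\mathrm{supp}(\chi_T)| = 2|T| + \mathbf{1}[|T|\text{ odd}] - \mathbf{1}[1\in T]$, from which the four cases are read off; this is a slightly cleaner packaging of the same computation.
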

\begin{proof}
		Let $\chi_{i}$ be the $i$-th row of the characteristic matrix of $\chi$ (see \eqref{eq2}).
		Note that for $2\leq i \leq n$, $\chi_{i}$ contains exactly three $1$'s and $\chi_{1}$ contains exactly two $1$'s. 
		Moreover, the $i$-th and $(n+i)$-th colomn are same for $2\leq i \leq n$.
		For a subset $T\subset [n]$ and $i\in T\setminus \{1\}$, the entry $1$ occurs as the $i$-th and $(n+i)$-th coordinate of vector $\chi_{T}$. 
		
		Suppose $|T|$ is odd  and $1\notin T$. 
		Then the entry $1$ occurs in $\chi_{T}$ at the $(n+1)$-st position. 
		Note that $T\subseteq [n]\setminus \{1\}$. 
		Therefore, for $i\in T$, the entry $1$ is placed at $i$-th, $(n+i)$-th and $(n+1)$-st positions. 
		Thus, $1$ occurs $2|T|+1$ many times in $\chi_{T}$. 
		
		Now assume that $|T|$ is odd and $1\in T$. 
		Note that for $i\in T\setminus \{1\}$, the entry $1$ already occurred at the $i$-th and $(n+i)$-th position.
		So $\chi_{T}$ contains $2(|T|-1)$ such $1$'s. 
		Two more $1$'s are added one of them at the $1$-st and the other at the $(n+1)$-st position. 
		Thus, $1$ occurs $2(|T|-1)+2=2|T|$ many times in $\chi_{T}$.
		
		Suppose $|T|$ is even with $1\notin T$.
		Then for each $i\in T$, the entry $1$ will occur at $i$-th and $(n+i)$-th position but not at the $(n+1)$-st position. 
		Thus, $1$ occurs in $\chi_{T}$ exactly $2|T|$ times.
		
		We now assume that $|T|$ is even and $1\in T$.
		Then for each $i\in T\setminus \{1\}$, $1$ occurs at the $i$-th and $(n+i)$-th position  but does't occurs at the $(n+1)$-st position of $\chi_{T}$. Thus, there are $2(|T|-1)$ such $1$'s in $\chi_{T}$. Since $1\in T$, one more extra $1$ gets added in $\chi_{T}$. Therefore, there are $2(|T|-1)+1=2|T|-1$ many $1$'s occurs in a vector $\chi_{T}$. This concludes the proof.
	\end{proof}
	
We now determine the homotopy types of the subcomplexes $K_{\chi, T}$ for any $T\subseteq [n]$. We denote the homotopy equivalence by the notation $\simeq$.	
\begin{lemma}\label{supp1}
		Let $K_{\chi,T}$ be the subcomplex of $K$ defined above. Then,
		\[K_{\chi,T} \simeq \begin{cases}
			S^{|T|-1}& \text{ if $|\mathrm{supp}(\chi_{T})|$ is even,}\\
			\{\star\} & \text{ if $|\mathrm{supp}(\chi_{T})|$ is odd . }
		\end{cases}\]
		
	\end{lemma}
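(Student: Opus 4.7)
The plan is to exploit the fact that $K$, the simplicial complex dual to $\partial(I^n)$, is the boundary of the $n$-dimensional cross-polytope, and therefore decomposes as a join of $n$ copies of $S^0$. Label the vertices of $K$ by $\{1,\dots,2n\}$ so that $i$ and $n+i$ correspond to the opposite facets $F_i$ and $F_{n+i}$ of $I^n$. Then
\[K \;\cong\; \underbrace{S^0 * S^0 * \cdots * S^0}_{n\text{ copies}},\]
where the $i$th factor is the two-vertex complex $\{i,\,n+i\}$.

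Next I would use the fact that full subcomplexes distribute over joins: for any $V\subseteq [2n]$,
\[K_V \;=\; L_1 * L_2 * \cdots * L_n,\]
where $L_i$ is the induced subcomplex of $\{i,n+i\}$ on $V\cap\{i,n+i\}$. Each factor $L_i$ is therefore a copy of $S^0$ when both $i$ and $n+i$ lie in $V$, a single vertex when exactly one does, and empty (i.e., absent from the join) when neither does. Since the join of any complex with a point is a cone, hence contractible, if \emph{any} pair $\{i,n+i\}$ meets $V$ in exactly one element then $K_V$ is contractible; otherwise $K_V$ is a join of $S^0$'s, and so a sphere whose dimension is one less than the number of pairs fully contained in $V$.

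Applying this with $V=\mathrm{supp}(\chi_T)$, it remains to identify which pairs are only half-present. Inspecting \Cref{eq2} shows that for $i\ge 2$ the $i$th and $(n+i)$th entries of $\chi_T$ both equal the indicator of $i\in T$, so these pairs are either fully in $V$ or fully out. The first pair $\{1,n+1\}$ behaves differently: position $1$ lies in $V$ iff $1\in T$, while position $n+1$ lies in $V$ iff $|T|$ is odd. Thus $\{1,n+1\}$ is fully present when $1\in T$ and $|T|$ is odd, fully absent when $1\notin T$ and $|T|$ is even, and half-present in the two remaining cases. In the first two situations $V$ consists of exactly $|T|$ complete pairs, so $K_{\chi,T}$ is a join of $|T|$ copies of $S^0$ and hence homeomorphic to $S^{|T|-1}$; in the other two, $L_1$ is a single point and $K_{\chi,T}$ is contractible. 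A glance at \Cref{supp} confirms that these are precisely the even-$|\mathrm{supp}(\chi_T)|$ and odd-$|\mathrm{supp}(\chi_T)|$ cases respectively.

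The main obstacle, if any, is setting up the join decomposition cleanly and verifying that the $(n+1)$st position is the only potential source of imbalance; once this is done, the conclusion is a direct translation of the parity data encoded in \Cref{supp}, using only the standard identifications of $S^0 * \cdots * S^0$ with a sphere and of a cone with a contractible space.
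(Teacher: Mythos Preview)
Your argument is correct and follows essentially the same route as the paper: both exploit that $K=\partial(I^n)^{\triangle}$ is the boundary of the $n$-cross-polytope, and hence determined by the antipodal pairs $\{i,n+i\}$, so that the induced subcomplex on $\mathrm{supp}(\chi_T)$ is a smaller cross-polytope boundary (a sphere) when all relevant pairs are intact and is a cone (contractible) when one pair is broken. Your join formulation $K\cong (S^0)^{*n}$ packages this a bit more crisply than the paper's language of ``removing stars of antipodal vertices'', but the content is the same.
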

	\begin{proof}
		Suppose $|\mathrm{supp}(\chi_{T})|$ is even. 
		Then it follows from  \Cref{supp} that, either $|T|$ is even and $1\notin T$ or $|T|$ is odd and $1\in T$. 
		
		Consider the first possibility of $|T|$ being even and $1\notin T$.
		Let $K$ be the boundary of the cross polytope of dimension $n$.
		Observe that for each $1\leq i \leq n$ the vertex $i$ of $K$ is antipodal to the  vertex $n+i$.
		Note that $T\subseteq [n]\setminus \{1\}$. Therefore, for each $i\in T$, $1$ occurs at the $i$-th and $(n+i)$-th position of vector $\chi_{T}$. 
		Consequently, $K_{\chi, T}$ can be obtained from $K$ by removing stars of those antipodal vertices which do not belong to $\mathrm{supp}(\chi_{T})$. 
		Therefore, the subcomplex $K_{\chi, T}$ is the boundary of $|T|$-dimensional cross polytope. 
		In other words, $K_{\chi,T}\simeq S^{|T|-1}.$
		
		Now consider the other possibility of $|T|$ being odd and $1\in T$. 
		Clearly, $1$ occurs at the $1$-st and $(n+1)$-th position of $\chi_{T}$. 
		Recall that the vertices in $\mathrm{supp}(\chi_{T})\setminus \{n\}$ are antipodal. 
		Therefore, for each $i\in T$, $1$ occurs at the $i$-th and $(n+i)$-th position of vector $\chi_{T}$. 
		Then it is clear that $K_{\chi, T}$ is obtained from $K$ by removing stars of antipodal vertices which do not belong to $\mathrm{supp}(\chi_{T})$.
		Therefore, $K_{\chi, T}$ is the boundary of $|T|$-dimensional cross polytope. 
		This gives $K_{\chi,T}\simeq S^{|T|-1}.$

		Now assume that $|\mathrm{supp}(\chi_{T})|$ is odd . 
		Then by \Cref{supp}, either $|T|$ is even  and $1\in T$ or $|T|$ is odd  and $1\notin T$. 
		Consider the first possibility that $|T|$ is even and $1\in T$.
		Therefore, $1$ occurs at the $1$-st position but not at the $(n+1)$-th position of $\chi_{T}$.
		Since the vertices in $\mathrm{supp}(\chi_{T})\setminus \{1\}$ are antipodal,
		one can check that
		$K_{\chi,T}\simeq S^{|T|-1}\setminus star(\{n+1\})$. 
		Clearly, $K_{\chi,T}\simeq \{\star\}$.
		Similarly, in the second possibility, we get that $K_{\chi,T}\simeq S^{|T|-1}\setminus star(\{1\})$.
		Therefore, $K_{\chi,T}\simeq \{\star\}$.
		This proves the lemma in the second case.
	\end{proof}
	
	We now compute the rational Betti numbers of $K_{n}$ using the Suciu-Trevisan formula. 
	
\begin{theorem}\label{thm:QBettiKn}
		Let $\beta_{i}$ be the $i$-th rational Betti number of $K_{n}$. Then 
		$$
		\beta_{i} = \begin{cases}
			\binom{n-1}{i} & \text{if $i$ is even  }\\
			\binom{n-1}{i-1} & \text{if $i$ is odd}.\\
		\end{cases}
		$$
		
	\end{theorem}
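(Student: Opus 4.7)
The plan is to apply the Suciu--Trevisan formula
\[
\beta_i(K_n,\mathbb{Q}) \;=\; \sum_{T\subseteq [n]} \tilde\beta_{i-1}(K_{\chi,T},\mathbb{Q})
\]
together with the homotopy classification of the induced subcomplexes $K_{\chi,T}$ that was already established in \Cref{supp1}. The only inputs I need are therefore the reduced Betti numbers of a sphere and of a point.

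First, I would observe that by \Cref{supp1}, every $K_{\chi,T}$ is either homotopy equivalent to $S^{|T|-1}$ (when $|\mathrm{supp}(\chi_T)|$ is even) or to a point (when $|\mathrm{supp}(\chi_T)|$ is odd). Since all reduced Betti numbers of a point vanish, only the terms where $K_{\chi,T}\simeq S^{|T|-1}$ contribute, and such a term contributes $1$ to $\beta_i$ precisely when $i-1 = |T|-1$, i.e.\ when $|T|=i$. Thus
\[
\beta_i(K_n,\mathbb{Q}) \;=\; \#\{\, T\subseteq [n] : |T|=i \text{ and } K_{\chi,T}\simeq S^{|T|-1}\,\}.
\]

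Second, using \Cref{supp} to characterize when $|\mathrm{supp}(\chi_T)|$ is even among subsets of size $i$, I would split into cases on the parity of $i$. If $i$ is even, then $|\mathrm{supp}(\chi_T)|$ is even exactly when $1\notin T$, so the count is the number of $i$-subsets of $[n]\setminus\{1\}$, namely $\binom{n-1}{i}$. If $i$ is odd, then $|\mathrm{supp}(\chi_T)|$ is even exactly when $1\in T$, so the count equals the number of $(i-1)$-subsets of $[n]\setminus\{1\}$, namely $\binom{n-1}{i-1}$. Combining these cases yields the stated formula.

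There is essentially no obstacle here, since the heavy lifting has already been done in \Cref{supp} and \Cref{supp1}; the only things to be careful about are the reduced-vs-unreduced convention for Betti numbers (a reduced sphere has $\tilde\beta_{k}(S^k)=1$ and all other reduced Betti numbers zero, which is what makes the degree-matching $i=|T|$ single out the right subsets) and the bookkeeping between the parity of $i$ and the membership of $1$ in $T$. This short deduction can then be written up in a few lines.
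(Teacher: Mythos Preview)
Your proposal is correct and follows essentially the same approach as the paper's own proof: both invoke \Cref{supp} and \Cref{supp1} to identify exactly which subsets $T\subseteq [n]$ yield $\tilde H_{i-1}(K_{\chi,T},\mathbb{Q})\cong\mathbb{Q}$, namely those with $|T|=i$ and either ($i$ even and $1\notin T$) or ($i$ odd and $1\in T$), and then count these subsets to obtain $\binom{n-1}{i}$ or $\binom{n-1}{i-1}$.
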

	\begin{proof}
		It follows from the \Cref{supp} and  \Cref{supp1} that the reduced rational homology of $K_{\chi, T}$ is $\Tilde{H}_{i-1}(K_{\chi, T}, \mathbb{Q}) \simeq \mathbb{Q}$ if and only if 
		\begin{enumerate}
			\item $|T|=i$ is even  and $1\notin T$.
			\item $|T|=i$ is odd  and $1\in T$.
		\end{enumerate}
		Now we can use Suciu-Trevisan formula to compute the Betti numbers of $K_{n}$.
		If $i$ is even, then the corresponding Betti number is a number of $i$-element subsets $[n]$ not containing $1$, and if  $i$ is odd, then the corresponding Betti number is the number of $i$-element subsets $[n]$ containing $1$. This proves the theorem.
	\end{proof}

	\begin{remark}
		Observe that, if $n$ is odd  then $\chi_{[n]}=(1,1,\dots,1)$. Therefore, $K_{\chi,[n]}=K$. In particular, $\beta_{2k+1}(K_{2k+1})=1$ for all $k$. Consequently, for each $k$, $K_{2k+1}$ is orientable.
	\end{remark}

Now we prove some properties of $K_n$, which follow from its real Bott structure.
Recall that a closed manifold $M$ of dimension $2n$ is \emph{cohomologically symplectic} if there exists a cohomology class $\alpha\in H^*(M)$ such that $\alpha^n\neq 0$.

\begin{proposition} \label{prop:cohosymplect}
Let $K_n$ be the $n$-dimensional Klein bottle. Then we have the following:
\begin{enumerate}
\item $K_{n}$ is orientable if and only if $n$ is odd ,
\item for no value of $n\geq 1$ the manifold $K_{n}$ is cohomologically symplectic.
\end{enumerate}
\end{proposition}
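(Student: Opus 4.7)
The plan is to derive both assertions directly from the rational Betti numbers already computed in Proposition~\ref{Prop: Betti numbers Kn}; no further machinery is needed.

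For part (1), I would invoke the standard fact that a closed connected $n$-manifold is orientable if and only if its top rational Betti number equals $1$. Substituting $i=n$ into the formula of Proposition~\ref{Prop: Betti numbers Kn} yields
\[
\beta_n(K_n,\mathbb{Q})=
\begin{cases}
\binom{n-1}{n-1}=1, & n \text{ odd},\\
\binom{n-1}{n}=0, & n \text{ even},
\end{cases}
\]
which settles both directions simultaneously. As an alternative check, one can read the same conclusion directly off the real Bott description in Theorem~\ref{rbm}: the generators $a_2,\dots,a_n$ of $G(B)$ act by the antipodal map on a single $S^1$-factor (and trivially on the rest), hence preserve orientation, while $a_1$ is rotation by $\pi$ on the first factor and complex conjugation on each of the remaining $n-1$ factors. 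Since complex conjugation reverses the orientation of $S^1$, the action of $a_1$ is orientation-preserving exactly when $n-1$ is even.

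For part (2), I would split on the parity of $n$. When $n$ is odd, $K_n$ has odd dimension and so the definition of cohomologically symplectic (which requires even dimension) is vacuously not satisfied. When $n=2m$ is even, cohomological symplecticity would demand a class $\alpha\in H^2(K_n,\mathbb{Q})$ with $\alpha^{m}\in H^{n}(K_n,\mathbb{Q})$ nonzero; but part (1) together with Poincar\'e duality (or equivalently Proposition~\ref{Prop: Betti numbers Kn} applied at $i=n$) gives $H^n(K_n,\mathbb{Q})=0$, so every such product vanishes and no candidate $\alpha$ exists.

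There is essentially no substantive obstacle in this argument; the only point that deserves a sentence of care is the coefficient convention: the standard reading of \emph{cohomologically symplectic} is with rational (or real) coefficients, which is what makes the top-degree vanishing argument work. Over $\mathbb{Z}_2$ the top cohomology of a closed manifold never vanishes and a different approach would be required, but that is not the convention in force here.
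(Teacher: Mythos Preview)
Your proof is correct, but it proceeds along a genuinely different route from the paper. The paper appeals to an external structural result about real Bott manifolds (\cite[Lemma~2.2]{zbMATH06673337}): $M(A)$ is orientable iff every row sum of the Bott matrix $A$ vanishes in $\mathbb{Z}_2$, and $M(A)$ admits a symplectic form iff each column of $A$ is repeated an even number of times. Applied to the matrix $B$ of \Cref{bottmat}, the first row has $n-1$ ones (giving part~(1)) and the first column is unique (giving part~(2)). Your approach, by contrast, stays entirely internal to the paper: both claims are read off the Betti-number formula of Proposition~\ref{Prop: Betti numbers Kn}, together with the elementary fact that a closed connected $n$-manifold has $\beta_n=1$ over $\mathbb{Q}$ iff it is orientable. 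This is arguably more economical here, since the Betti numbers have already been computed; the paper's route, on the other hand, generalizes immediately to arbitrary real Bott manifolds without any Betti-number input. One small wording point: in part~(2) your parenthetical ``Poincar\'e duality'' is slightly misleading, since rational Poincar\'e duality does not hold for the non-orientable $K_{2m}$; the statement you actually need (and which you also cite) is simply $\beta_n=0$ from Proposition~\ref{Prop: Betti numbers Kn}, or equivalently the standard fact that $H^n(M;\mathbb{Q})=0$ for any closed non-orientable $n$-manifold.
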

\begin{proof}
It was shown in the first part of \cite[Lemma 2.2]{zbMATH06673337} that the real Bott manifold $M(A)$ corresponding to a Bott matrix $A=[A^i_j]$ is orientable if and only if all row sums of $A$ are zero in $\mathbb{Z}_{2}$. 
Recall that the Bott matrix $B$ associated with $K_{n}$ is given by \Cref{bottmat}.
All row sums of $B$ are zero if and only if $n$ is odd.
This proves the first of the lemma. 

The second part of \cite[Lemma 2.2]{zbMATH06673337} says, $M(A)$ admits a symplectic form if and only if $|\{k : A_{k}=A_{j}\}|$ is even for every $1\leq j\leq n$.
Let $B_{i}$ is the $i$-th column of $B$. 
For each $j\in [n]$, consider the collection $B(j)=\{k\in [n] : B_{k}=B_{j}\}$.
Note that $|B(1)|=1$. Therefore, $K_n$ never admits a symplectic form.
\end{proof}

\begin{remark}
The first part of the above lemma also follows from \cite[Proposition 3.1]{zbMATH07134923}.
\end{remark}

The rational cohomology ring $H^{\ast}(M(A),\mathbb{Q})$ was computed by Choi and Park  in \cite{choi2017multiplication}. 
They showed that $H^{\ast}(M(A),\mathbb{Q})$ is completely determined by the binary matroid of $A$. 
We refer the reader to \cite[Section 4]{choi2017multiplication} for more details. 

Let $A$ be a Bott matrix and $E=\{A_j : 1\leq j\leq n\}$ be the set of its columns. 
A subset $C\subseteq E$ is said to be \emph{minimally dependent} if every proper subset of $C$ is linearly independent.
We consider the collection $\mathcal{C}=\{C : C\subseteq E \text{ is minimally dependent}\}$.
The matroid $T(A)=(E,\mathcal{C})$ is called a \emph{binary matroid} associated with $A$ and the elements $C\in \mathcal{C}$ are called \emph{circuits}. 

\begin{theorem}[{\cite[Proposition 4.3]{choi2017multiplication}}]
Let $x_C$ be the formal symbol for the cohomology class corresponding to a circuit $C$. Then \[H^{\ast}(M(A),\mathbb{Q})\cong \frac{ \mathbb{Q}<x_C : C\in \mathcal{C}>}{\sim},\] where the relations are given as follows: \[x_{C}x_{C'} = \begin{cases}
	(-1)^{|C||C'|}x_{C}x_{C'} & \text{if $C\cap C'=\emptyset$ }\\
    0 & \text{if $C\cap C'\neq \emptyset$ },\\
	\end{cases}\] with $\deg(x_C)=|C|$. 
\end{theorem}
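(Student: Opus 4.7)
\medskip

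The plan is to realize $H^{\ast}(M(A),\mathbb{Q})$ as a $G(A)$-invariant subalgebra of the cohomology of the torus and then read off the generators and relations matroid-theoretically. Since $G(A)\cong \mathbb{Z}_2^n$ acts freely on $(S^1)^n$ with quotient $M(A)$, a standard transfer argument gives the isomorphism
\begin{equation*}
H^{\ast}(M(A),\mathbb{Q})\;\cong\;H^{\ast}((S^1)^n,\mathbb{Q})^{G(A)}\;=\;\Lambda_{\mathbb{Q}}[y_1,\dots,y_n]^{G(A)},
\end{equation*}
where $y_j\in H^1(S^1)$ is a generator on the $j$th factor. So the problem reduces to understanding the induced action of the involutions $a_i$ of \Cref{bottino} on the exterior algebra.

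First I would compute this action. On the $i$th factor, $a_i$ acts by the translation $z_i\mapsto -z_i$, which is trivial on cohomology; on the $j$th factor with $j\neq i$, $a_i$ acts by $z_j\mapsto z_j(A^i_j)$, which is the identity when $A^i_j=0$ and complex conjugation when $A^i_j=1$, the latter reversing orientation. Hence $a_i^{\ast}(y_j)=(-1)^{A^i_j}y_j$ for all $j$ (with the convention $A^i_i=0$). It follows that a monomial $y_J:=y_{j_1}\wedge\cdots\wedge y_{j_k}$, with $J=\{j_1<\cdots<j_k\}\subseteq[n]$, is $G(A)$-invariant exactly when $\sum_{s}A^i_{j_s}=0$ in $\mathbb{Z}_2$ for every $i$, i.e.\ when $\sum_{j\in J}A_j=0$. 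In matroid terms, $y_J$ is invariant iff $J$ is a \emph{cycle} of the binary matroid $T(A)$.

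Next I would invoke the standard fact that in a binary matroid every cycle decomposes as a disjoint union of circuits. This shows that the invariant subalgebra is spanned as a $\mathbb{Q}$-vector space by products $x_{C_1}\cdots x_{C_r}$ of pairwise disjoint circuits, where $x_C:=y_C$ for a circuit $C$. The relations listed in the theorem are then verified directly inside the exterior algebra: if $C\cap C'\neq\emptyset$ then $y_C\wedge y_{C'}$ contains a repeated factor and vanishes, giving $x_C x_{C'}=0$; and if $C\cap C'=\emptyset$ the usual sign rule for the wedge of an element of degree $|C|$ with an element of degree $|C'|$ yields the graded commutativity $x_C x_{C'}=(-1)^{|C||C'|}x_{C'}x_C$.

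The last and main step, and in my view the main obstacle, is proving that no further relations are needed: one must show that the quotient algebra defined by these generators and relations maps isomorphically onto the invariant subalgebra. I would do this by producing an explicit $\mathbb{Q}$-basis of the quotient indexed by unordered collections of pairwise disjoint circuits and matching it with the basis $\{y_J:\sum_{j\in J}A_j=0\}$ of the invariants via the cycle-decomposition result above; the subtlety is that a cycle can decompose into disjoint circuits in several ways, so one must verify that the induced sign ambiguities are already accounted for by graded commutativity and by the vanishing relation, and do not force any additional syzygies. Once this linear-algebraic bookkeeping is in place, the claimed presentation follows.
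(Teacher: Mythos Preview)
The paper does not prove this theorem: it is cited as \cite[Proposition 4.3]{choi2017multiplication}, and the surrounding text explicitly refers the reader to that source for details. There is therefore no in-paper argument to compare your proposal against.

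That said, your approach---transfer to identify $H^{\ast}(M(A),\mathbb{Q})$ with the $G(A)$-invariants in $\Lambda_{\mathbb{Q}}[y_1,\dots,y_n]$, the computation $a_i^{\ast}y_j=(-1)^{A^i_j}y_j$, and the characterization of invariant monomials $y_J$ by the cycle condition $\sum_{j\in J}A_j=0$---is correct and is the standard route (and the one Choi--Park take). You are also right to flag completeness of the presentation as the crux, but you should be less optimistic about resolving it with only the two listed relations. Already for the Bott matrix $B$ of $K_n$ one has, inside the exterior algebra,
\[
X_{23}X_{45}\;=\;y_2y_3y_4y_5\;=\;-\,X_{24}X_{35},
\]
and this identity is \emph{not} a consequence of graded commutativity together with the vanishing rule for overlapping circuits. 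So the presentation, as literally quoted here (and in the paper's subsequent specialization to $K_n$), is abbreviated: Choi--Park's actual description includes the relations coming from distinct circuit decompositions of the same cycle. Your final step should therefore produce those extra relations rather than argue that none are needed.
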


The binary matroid corresponding to the Bott matrix of $K_n$ is  \[\mathcal{C}=\{\{1\},\{i,j\} : 2\leq i<j\leq n\}.\]

Let $Y$ be the formal symbol of degree $1$ cohomology class corresponding to the singleton set $\{1\}$ and for each $\{i,j\}\in \mathcal{C}$, let $X_{ij}$ be the formal symbol of degree $2$ cohomology class. 
Then we have \[H^{\ast}(K_n,\mathbb{Q})\cong \frac{ \mathbb{Q}[Y, X_{ij} : 2\leq i<j\leq n]}{\sim},\] where the following relations hold for $2\leq i<j\leq n$ and $2\leq k< l\leq n$.
\begin{enumerate}
\item $Y^2=X_{ij}^2=0$,
\item $YX_{ij}=X_{ij}Y$,
\item $X_{ij}X_{kl}=X_{kl}X_{ij}$ if  $\{i,j\}\cap \{k,l\}=\emptyset$,
\item $X_{ij}X_{kl}=0$ if $\{i,j\}\cap \{k,l\}\neq\emptyset$.
\end{enumerate}

\section{The case of other two long genetic codes}\label{sec3}
In this section, we define certain characteristic functions on the facets of $P_{5}\times I^{n-2}$ and $P_{6}\times I^{n-2}$, where $P_n$ is the $n$-gon.
We show that the corresponding small covers $X(P_{5}\times I^{n-2})$ and $X(P_{6}\times I^{n-2})$ are homeomorphic to the planar polygon spaces associated with the genetic codes $\la\{1,2,\dots,n-2,n,n+3\}\ra$ and $\la\{1,2,\dots,n-2,n+1,n+3\}\ra$ respectively. 
\subsection{The small cover \texorpdfstring{$X(P_{5}\times I^{n-2},\chi)$}{Lg}}
We refer reader to \cite{article} for the following definition and remark. 
\begin{definition}
Let $P$ and $P'$ are two convex polytopes of dimension $d$ and $d'$, both containing the origin. Then their direct sum is a $(d+d')$-dimensional polytope  \[P\oplus P'=\emph{conv}(\{(p,0)\in \mathbb{R}^{d+d'} : p\in P\}\cup \{(0,p')\in \mathbb{R}^{d+d'} : p' \in P'\}).\]
\end{definition}

\begin{remark}\label{dp}
Let $P^{\triangle}$ and $P'^{\triangle}$ be the dual polytopes of $P$ and $P'$, respectively, containing the origin. Then their direct sum and product is related as 
$P\times P'= (P^{\triangle}\oplus P'^{\triangle})^{\triangle}$. 
In particular, if $P_{m}$ is the $m$-gon then $(P_{m}\times I^{n-2})^{\triangle}= P_{m}\oplus (I^{n-2})^{\triangle}.$
\end{remark}

To construct the characteristic function over $P_{5}\times I^{n-2}$, we give a specific labeling for the facets of $P_{5}\times I^{n-2}$ as follows :
for each $1\leq i \leq n-2$, we set
\begin{itemize}
    \item  $F_{i}=P_{5}\times I \times \dots \times \{-1\}\times \dots \times I,$ where $\{-1\}$ is at the $i$-th position.
    \item $F_{n+i}=P_{5}\times I \times \dots \times \{1\}\times \dots \times I,$ where $\{1\}$ is at the $i$-th position.
    \item For $1\leq i\leq 5$, let $E_{i}$ be the $i$-th side of $P_{5}$. We set \[F_{n-1}=E_{1}\times I^{n-2}, F_{n}=E_{2}\times I^{n-2},  F_{2n-1}=E_{3}\times I^{n-2},\] \[F_{2n}=E_{4}\times I^{n-2}, F_{2n+1}=E_{5}\times I^{n-2}. \]
\end{itemize}
Let $\mathcal{F}(P_{5}\times I^{n-2})$ be the collection of facets of $P_{5}\times I^{n-2}$.
We define a function $\chi : \mathcal{F}(P_{5}\times I^{n-2})\longrightarrow \mathbb{Z}_{2}^{n}$ by
\begin{equation}\label{eq3}
  \chi(F) = \begin{cases}
e_{i} & \text{if $F=F_{i}$ and $F=F_{n+i}$, $1\leq i \leq n$ }\\
\sum_{i=1}^{n}e_{i} & \text{if $F=E_{5}\times I^{n-2}$}.\\
\end{cases}  
\end{equation}

\begin{lemma}
The function $\chi$ is a characteristic function for $P_{5}\times I^{n-2}$.
\end{lemma}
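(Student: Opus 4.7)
The plan is to verify the defining condition of a characteristic function directly by enumerating vertices of the product polytope and checking linear independence of the associated columns over $\mathbb{Z}_2$.

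First, I would describe the vertex set: every vertex $v$ of $P_5 \times I^{n-2}$ has the form $v = v_P \times v_I$ where $v_P$ is a vertex of $P_5$ and $v_I$ is a vertex of $I^{n-2}$. The set of facets containing $v$ splits accordingly into a \emph{cube part} and a \emph{pentagon part}. A vertex of $I^{n-2}$ lies in exactly one facet from each opposite pair $\{F_i, F_{n+i}\}$ for $1 \leq i \leq n-2$; by the definition of $\chi$ in \Cref{eq3}, each such pair contributes the single vector $e_i$. Hence the cube part of the facets through $v$ always contributes the fixed set $\{e_1, e_2, \ldots, e_{n-2}\}$ to the columns of $\chi$ at $v$.

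Next I would enumerate the five vertices of $P_5$. Each is the intersection of two consecutive edges $E_j \cap E_{j+1}$ (indices mod $5$), which translates into a pair of pentagon-type facets among $F_{n-1}, F_n, F_{2n-1}, F_{2n}, F_{2n+1}$. A short case check gives the following $\chi$-images of the pentagon parts: $\{e_{n-1}, e_n\}$ for the vertices $E_1 \cap E_2$, $E_2 \cap E_3$, and $E_3 \cap E_4$; $\{e_n, \sum_{i=1}^n e_i\}$ for $E_4 \cap E_5$; and $\{\sum_{i=1}^n e_i, e_{n-1}\}$ for $E_5 \cap E_1$.

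Finally I would combine the two parts. At each vertex the full set of column vectors is $\{e_1, \ldots, e_{n-2}\}$ together with two vectors from $\{e_{n-1}, e_n, \sum_{i=1}^n e_i\}$. Passing to the quotient $\mathbb{Z}_2^n / \langle e_1, \ldots, e_{n-2} \rangle \cong \mathbb{Z}_2^2$, the three candidate vectors reduce to $e_{n-1}$, $e_n$, and $e_{n-1}+e_n$, any two of which are linearly independent. Thus the full collection of $n$ columns forms a basis of $\mathbb{Z}_2^n$ at every vertex, which is exactly the characteristic condition. There is no real obstacle here; the only thing that requires care is the bookkeeping of which two pentagon facets meet at each vertex of $P_5$ and recording the corresponding pair of $\chi$-values correctly.
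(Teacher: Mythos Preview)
Your proof is correct and follows essentially the same approach as the paper: both split the facets through a vertex into a cube part (always yielding $e_1,\dots,e_{n-2}$) and a pentagon part, then verify that the resulting $n$ vectors form a basis of $\mathbb{Z}_2^n$. Your version is in fact slightly more careful, since you enumerate all five pentagon vertices and use a clean quotient argument, whereas the paper collapses the two vertices on $E_5$ into a single case.
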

\begin{proof}
Observe that 
\[\chi(\mathcal{F}(v)) = \begin{cases}
\{e_{1},\dots,e_{n-1},\sum_{i=1}^{n}e_{i}\} & \text{if $v\in F_{2n+1}$,}\\
\{e_{1},\dots,e_{n}\} & \text{otherwise}.
\end{cases}\] Therefore, for any vertex, $\chi(\mathcal{F}(v))$ forms a basis of $\mathbb{Z}_{2}^{n}$. Consequently, $\chi$ is the characteristic function. 
\end{proof}
It is clear that the $n\times (2n+1)$-matrix of $\chi$ is
\begin{equation}\label{eq:charmatrixp5I}
 \chi = 
\begin{bmatrix}
1 & 0 & \cdots & 0 & 1 & 0 & \cdots &0 & 1\\
0 & 1 & \cdots & 0 & 0  & 1 & \cdots &0 & 1\\
\vdots  & \vdots  & \ddots & \vdots & \vdots & \vdots & \ddots &0 & \vdots\\
0 & 0 & \cdots & 1 & 0 & 0 & \cdots & 1 & 1
\end{bmatrix}.   
\end{equation}

\begin{theorem}\label{thm:small-cover-sructure-P5I}
Let $\alpha$ be a length vector whose genetic code is $\la\{1,2,\dots,n-2,n,n+3\}\ra$ and let $\chi$ be a characteristic function defined as in \eqref{eq3}. Then $X(P_{5}\times I^{n-2}, \chi)\cong \mbalpha$.
\end{theorem}
\begin{proof}
We will show that $X(P_5\times I^{n-2},\chi)$ is homeomorphic to the chain space $\mathrm{Ch}(\beta)$, where the short code of $\beta$ is $\la\{1,2,\dots,n-2,n,n+2\}\ra$. (The length vector $\beta$ is of size $n+2$). Recall that $\mathrm{Ch}(\beta)$ is a small cover over the simple polytope $P(\beta)$ (see \cite[Page 9]{daundkar2021moment}) and the corresponding characteristic function $\chi'$ is obtained using the description given on \cite[Page 15]{daundkar2021moment}. It follows from \cite[Lemma 4.5]{daundkar2021moment} and \cite[Theorem 3]{trifreeptp} (or see \cite[Theorem 5.1]{daundkar2021moment}) that $P(\beta)\cong P_5\times I^{n-2}$.  Our next task is to show that $\chi$ coincides with the characteristic function of $P(\beta)$.
Now observe that the one-one correspondence between the facets of $P(\beta)$ and $P_5\times I^{n-2}$ can be given as follows: for $1\leq i\leq n$
\[F_i \longrightarrow F_i, ~~ \bar{F}_{i}\longrightarrow F_{n+i}, ~~ F_{2n+1}\longrightarrow E_5\times I^{n-2}.\]
Now we define $\chi'$ over facets of $P(\beta)$ following the description given on  \cite[Page 15]{daundkar2021moment}.
\[\chi'(F_i)=e_i= \chi'(\bar{F}_{i})  ~ \text{for}~ 1\leq i\leq n, ~\text{since}~ \{i,n+2\}~ \text{is short}~\text{and},\]
\[\chi'(F_{2n+1})=\sum_{i=1}^{n}e_i,~\text{since}~ \{n+1,n+2\}~ \text{is long}.\]
One can see that $\chi'$ coincides with $\chi$ defined in \eqref{eq3}. Now it follows from \cite[Remark 3.4]{daundkar2021moment} that, $\mathrm{Ch}(\alpha)\cong \mbalpha$. 
This concludes the result. 
\end{proof}

We will now compute rational Betti numbers of $\mbalpha$ with $\alpha$  having the genetic code $\la\{1,2,\dots,n-2,n,n+3\}\ra$.

\begin{lemma}
Let $\chi$ be a characteristic function defined in \eqref{eq3}. Then for any subset $T\subseteq [n]$ we have 
\[|\mathrm{supp}(\chi_{T})| = \begin{cases}
2|T| & \text{if $|T|$ is even}\\
2|T|+1 & \text{if $|T|$ is odd}.\\
\end{cases}\]
\end{lemma}
\begin{proof}
Observe that each row of the characteristic matrix given in \eqref{eq:charmatrixp5I} contains three $1$'s, and for each  $1\leq i\leq n$, the $i$-th and $(n+i)$-th column coincides.
One can see that, $1$ occurs at the $i$-th and $(n+i)$-th position of the vector $\chi_{T}$, for $i\in T$ and $1\leq i\leq n$.
Moreover, if $|T|$ is odd  then $1$ occurs in $\chi_{T}$ at the $(2n+1)$-st position as well. 
Thus, $1$ occurs $2|T|+1$ many times in $\chi_{T}$.
Now, suppose $|T|$ is even. 
Then, by the description of the characteristic matrix \eqref{eq:charmatrixp5I}, one can observe that $1$ occurs at $i$-th and $(n+i)$-th position of $\chi_{T}$ but doesn't occur at the $(2n+1)$-st position. 
Thus, in this case $1$ occurs in $\chi_{T}$ exactly $2|T|$ times.
\end{proof}

\begin{lemma}\label{p51p52}
Let $\chi$ be a characteristic function defined in \eqref{eq3}. Then for $T\subseteq [n]$, we have the following homotopy types.
\begin{enumerate}
\item Suppose $\{n-1,n\}\subseteq T$. Then
$K_{\chi, T}\simeq\begin{cases}
S^{|T|-1} & \text{$|T|$ is odd}\\
\{\star\} & \text{$|T|$ is even .}\\
\end{cases}$
\vspace{.2cm}
\item Suppose $\{n-1,n\}\nsubseteq T$. Then 
$K_{\chi, T}\simeq\begin{cases}
S^{|T|-1} & \text{$|T|$ is even}\\
\{\star\} & \text{$|T|$ is odd .}\\
\end{cases}$
\end{enumerate}
\end{lemma}

\begin{proof}
(1) Suppose $\{n-1,n\}\subseteq T$.
We first assume that $|T|$ is odd . Then, \[\{n-1,n,2n-1,2n,2n+1\}\subseteq \mathrm{supp}(\chi_{T}).\] 
Since the above set forms a vertex set of $P_5$, we have $P_{5}\subseteq K_{\chi, T}$. 
Therefore, the antipodal vertices which does not belongs to \[([n-2] \cup \{n+i : i\in [n-2]\})\cap \mathrm{supp}(\chi_{T})\] are removed from  $K_{\chi, T}$. 
Since we have K$\cong \partial(P_{5}\oplus (I^{n-2})^{\triangle})$ and  $K_{\chi, T}\simeq \partial(P_{5}\oplus (I^{|T|-2})^{\triangle})$, we have $K_{\chi, T}\simeq S^{|T|-1}.$
If $|T|$ is even, then $2n+1\notin \mathrm{supp}(\chi_{T})$. 
This gives, $K_{\chi, T}\simeq S^{|T|-1}\setminus star(\{2n+1\}).$
Consequently, $K_{\chi, T}\simeq \{\star\}.$

(2) Suppose $\{n-1,n\}\nsubseteq T$, Assume that $|T|$ is even. 
Then we have, \[\{n-1,n,2n-1,2n, 2n+1\}\nsubseteq \mathrm{supp}(\chi_{T}).\] 
Thus, $P_{5}\nsubseteq K_{\chi,T}$. 
It follows from \Cref{dp} that $K\simeq \partial(P_{5}\oplus (I^{n-2})^{\triangle}).$ 
Therefore, we have $K_{\chi,T}\simeq \partial((I^{|T|})^{\triangle}).$
Now assume that $|T|$ is odd . 
This gives us $\{n-1,n,2n-1,2n\}\nsubseteq \mathrm{supp}(\chi_{T})$ and
$2n+1\in \mathrm{supp}(\chi_{T})$. 
Note that the vertex $2n+1$ in $K$ is adjacent to all the vertices in $[n-2] \cup \{n+i : i\in [n-2]\}.$ 
Therefore, in $K_{\chi, T}$ the vertex $2n+1$ is adjacent to $([n-2] \cup \{n+i : i\in [n-2]\})\cap \mathrm{supp}(\chi_{T})$.
This gives $K_{\chi,T}$ is isomorphic to the cone over $S^{|T|-1}$ with the apex vertex $2n+1$ as $K_{\chi,T}\setminus \{2n+1\}\simeq S^{|T|-1}.$  This proves the part (2).
\end{proof}

\begin{lemma}\label{P53}
Let $\chi$ be a characteristic function defined in \eqref{eq3}. For $T\subseteq [n]$,
if the following holds 
\begin{enumerate}
    \item either $n-1\notin T$ and $n\in T$
    \item or $n-1\in T$ and $n\notin T$,
\end{enumerate}
then $K_{\chi,T}\simeq S^{|T|-1}$.
\end{lemma}
\begin{proof}
Suppose $n-1\notin T$ and $n\in T$ with $|T|$ is even . This gives us \[\{n-1,2n-1,2n+1\}\nsubseteq \mathrm{supp}(\chi_{T}) \text{ and } \{n,2n\}\subseteq \mathrm{supp}(\chi_{T}).\] Therefore, $\mathrm{supp}(\chi_{T})$ contains two antipodal vertices from $P_{5}$ and $2(|T|-1)$ vertices from $(I^{n-2})^{\triangle}$. It is easy to see that $K_{\chi,T}\simeq \partial(I\oplus I^{(|T|-1})^{\triangle}).$ Thus, $K_{\chi, T}\simeq S^{|T|-1}.$
Now assume that $|T|$ is odd . Then we have \[\{n-1,2n-1\}\nsubseteq \mathrm{supp}(\chi_{T}) \text{ and } \{n,2n,2n+1\}\subseteq \mathrm{supp}(\chi_{T}).\] 
Since $\{2n,2n+1\}$ are adjacent vertices and $n$ is antipodal to $2n$, we can collapse an edge $\{2n,2n+1\}$ to $2n$. In particular, we have $P_{5}\cap K_{\chi, T}\simeq S^{0}.$ Therefore, again we have $K_{\chi,T}\simeq \partial(I\oplus I^{(|T|-1})^{\triangle}).$ This proves the lemma in the context of the first case. Similar arguments can be used to prove the lemma in the second case. 
\end{proof}

\begin{theorem}\label{thm:QbettiP5I}
Let $\alpha$ be a length vector whose genetic code is $\la\{1,2,\dots,n-2,n,n+3\}\ra$ and
let $\beta_{i}(\mbalpha,\Q)$ be its $i$-th rational Betti number. Then
\[
\beta_{i}(\mbalpha,\Q) = \begin{cases}
	2\binom{n-2}{i-1}+\binom{n-2}{i} & \text{if $i$ is even and}\\
    2\binom{n-2}{i-1}+\binom{n-2}{i-2} & \text{if $i$ is odd}.\\
	\end{cases}
\]
\end{theorem}
\begin{proof}
Using \Cref{p51p52} and \Cref{P53} we have 
$\Tilde{H}_{i-1}(K_{\chi, T}, \mathbb{Q}) \cong \mathbb{Q}$ if the following conditions holds :
\begin{enumerate}
    \item If $|T|=i$ is odd  with $\{n-1,n\}\subseteq T$.
    \item If $|T|=i$ is even  with $\{n-1,n\}\nsubseteq T$.
    \item If $|T|=i$ with $n-1\notin T$ and $n\in T$.
    \item If $|T|=i$ with $n-1\in T$ and $n\notin T$. 
\end{enumerate}
We use the Suciu-Trevisan formula to compute the rational Betti numbers of $X(P_{5}\times I^{n-2}, \chi)\cong \mbalpha$. 
If $i$ is even  then the corresponding rational Betti number is the sum of the cardinalities of $i$-element subsets of $[n]$ of type $(2)$, $(3)$ and $(4)$. 
Similarly, if $i$ is odd  then the corresponding Betti number is the sum of the cardinalities of $i$-element subsets of $[n]$ of type $(1)$, $(3)$ and $(4)$. 
\end{proof}

\begin{remark}
We observe that  $\sum_{i=1}^{n}\beta_{i}(K_{n},\mathbb{Q})=3\cdot2^{n-2}$.
\end{remark}

\subsection{Betti numbers of \texorpdfstring{$X(P_{6}\times I^{n-2},\chi)$}{Lg}  }
To construct the characteristic function over $P_{6}\times I^{n-2}$, we give a specific labeling for its facets:
\begin{itemize}
    \item For each $1\leq i \leq n-2$, we set $F_{i}=P_{6}\times I \times \dots \times \{-1\}\times \dots \times I,$ where $\{-1\}$ is at the $i$-th position.
    \item For each $1\leq i \leq n-2$, we set $F_{n+1+i}=P_{6}\times I \times \dots \times \{1\}\times \dots \times I$, where $\{1\}$ is at the $i$-th position.
    \item For $1\leq i\leq 6$, let $E_{i}$ is the $i$-th side of $P_{6}$. Then we set \[F_{n-1}=E_{1}\times I^{n-2}, \hspace{1mm} F_{n}=E_{2}\times I^{n-2},\hspace{1mm} F_{2n-1}=E_{3}\times I^{n-2},\] \[F_{2n}=E_{4}\times I^{n-2},\hspace{1mm} F_{2n+1}=E_{5}\times I^{n-2},\hspace{1mm} F_{2n+1}=E_{6}\times I^{n-2}. \]
\end{itemize}
Define a function $\chi : \mathcal{F}(P_{6}\times I^{n-2})\to \mathbb{Z}_{2}^{n}$ by
\begin{equation}\label{eq:charfnp6I}
\chi(F) = \begin{cases}
e_{i} & \text{if $F=F_{i}$ and $F=F_{n+1+i}$, $1\leq i \leq n$ }\\
\sum_{i=1}^{n}e_{i} & \text{if $F=F_{n+1}$ and $F=F_{2n+2}$}.\\
\end{cases}    
\end{equation}

\begin{lemma}
The function $\chi$ is a characteristic function for $P_{6}\times I^{n-2}$.
\end{lemma}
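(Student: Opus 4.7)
The plan is to mirror the proof of the analogous lemma for $P_{5}\times I^{n-2}$. Any vertex $v$ of $P_{6}\times I^{n-2}$ decomposes as $v=w\times u$ with $w$ a vertex of $P_{6}$ and $u$ a vertex of $I^{n-2}$, so the set $\mathcal{F}(v)$ of facets containing $v$ consists of the two sides $E_{i},E_{i+1}$ of $P_{6}$ meeting at $w$ (indices read modulo $6$) together with the $n-2$ cube facets containing $u$. Since the cube facets containing $u$ comprise one facet from each of the $n-2$ coordinate directions, and each such facet (whether of type $F_{i}$ or $F_{n+1+i}$ for $1\le i\le n-2$) carries the value $e_{i}$, they supply precisely the vectors $e_{1},\ldots,e_{n-2}$, which already form a linearly independent set.

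Second, I would reduce the characteristic condition to a two-dimensional check: the full collection $\chi(\mathcal{F}(v))$ is a basis of $\mathbb{Z}_{2}^{n}$ if and only if the images $\chi(E_{i}),\chi(E_{i+1})$ are linearly independent modulo $\mathrm{span}(e_{1},\ldots,e_{n-2})$. In the quotient $\mathbb{Z}_{2}^{n}/\mathrm{span}(e_{1},\ldots,e_{n-2})\cong\mathbb{Z}_{2}^{2}$, the three vectors $e_{n-1},\,e_{n},\,\sum_{k=1}^{n}e_{k}$ (the only values $\chi$ takes on the hexagon sides) project respectively to the three distinct nonzero elements of $\mathbb{Z}_{2}^{2}$, any two of which are linearly independent. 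Hence it suffices to verify that no two cyclically consecutive sides of $P_{6}$ carry the same value of $\chi$.

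Finally, I would traverse the six hexagon sides $E_{1},\ldots,E_{6}$ in cyclic order and read off their $\chi$-values directly from the definition; one finds the alternating pattern $\bigl(e_{n-1},\,e_{n},\,\textstyle\sum_{k}e_{k},\,e_{n-1},\,e_{n},\,\sum_{k}e_{k}\bigr)$, and so consecutive entries always differ. The principal obstacle is purely bookkeeping: the six hexagon sides are indexed $F_{n-1},F_{n},F_{n+1},F_{2n},F_{2n+1},F_{2n+2}$ rather than consecutively, so one must unwind this labelling and confirm that the two repetitions of each of the three values $e_{n-1},e_{n},\sum_{k}e_{k}$ around $P_{6}$ are always separated by the other two values. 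Once this is done, the six adjacency checks are immediate, and $\chi(\mathcal{F}(v))$ is a basis of $\mathbb{Z}_{2}^{n}$ at every vertex $v$, showing that $\chi$ is characteristic.
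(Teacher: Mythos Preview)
Your proof is correct and follows essentially the same approach as the paper: both verify that $\chi(\mathcal{F}(v))$ is a basis of $\mathbb{Z}_2^n$ at every vertex $v$, with the paper simply asserting the resulting sets while you organize the check via the product decomposition $v=w\times u$ and a reduction to the hexagon factor. Your traversal of the six consecutive hexagon sides is in fact more thorough than the paper's terse two-case statement (which, as written, collapses the three distinct possibilities $\{e_1,\dots,e_n\}$, $\{e_1,\dots,e_{n-2},e_n,\sum e_i\}$, $\{e_1,\dots,e_{n-1},\sum e_i\}$ into two), so nothing is missing from your argument.
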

\begin{proof}
Note that
\[\chi(\mathcal{F}(v)) = \begin{cases}
\{e_{1},\dots,e_{n-1},\sum_{i=1}^{n}e_{i}\} & \text{if either $v\in F_{n+1}$ or $v\in F_{2n+2}$,}\\
\{e_{1},\dots,e_{n}\} & \text{otherwise}.
\end{cases}.\]
\end{proof}
It is clear that the $(n\times 2n)$-matrix of $\chi$ is 
\begin{equation}\label{eq:charmatrixp6i}
  \chi = 
\begin{bmatrix}
1 & 0 & \cdots & 0 & 1 & 1 & 0 & \cdots &0 & 1\\
0 & 1 & \cdots & 0 & 1& 0  & 1 & \cdots &0 & 1\\
\vdots  & \vdots  & \ddots & \vdots & \vdots & \vdots & \vdots & \ddots &0 & \vdots\\
0 & 0 & \cdots & 1 & 1 & 0 & 0 & \cdots & 1 & 1
\end{bmatrix}.   
\end{equation}

\begin{theorem}\label{thm:small-cover-sructure-P6I}
Let $\alpha$ be a length vector whose genetic code is $\la\{1,2,\dots,n-2,n+1,n+3\}\ra$ and let $\chi$ be a characteristic function defined as in \eqref{eq:charfnp6I}. Then $X(P_{5}\times I^{n-2}, \chi)\cong \mbalpha$.
\end{theorem}
\begin{proof}
The proof is similar to that of \Cref{thm:small-cover-sructure-P5I}.
\end{proof}

\begin{lemma}
Let $\chi$ be a characteristic function defined as in \eqref{eq:charfnp6I} and $T\subseteq [n]$. Then we have 
\[|\mathrm{supp}(\chi_{T})| = \begin{cases}
2|T|, & \text{if $|T|$ is even,}\\
2|T|+2, & \text{if $|T|$ is odd}.\\
\end{cases}\]
\end{lemma}
\begin{proof}
Observe that, each row of the characteristic matrix \eqref{eq:charmatrixp6i} contains four $1$'s. Moreover, $i$-th and $(n+1+i)$-th columns coincide for $1\leq i\leq n+1$. 

For each $i\in T$ with $1\leq i\leq n$, the entry $1$ occurs at the $i$-th and $(n+1+i)$-th position.
Moreover, if $|T|$ is odd  then the entry $1$ occurs in $\chi_{T}$ at the $(n+1)$-st and $(2n+2)$-nd position. 
In particular, $1$ occurs $2|T|+2$ many times in $\chi_{T}$. 

Suppose $|T|$ is even . 
Then the entry $1$ occur at $i$-th and $(n+1+i)$-th position but it doesn't occur at the $(n+1)$-st and $(2n+2)$-nd position. 
Therefore, in this case, the entry $1$ occurs exactly $2|T|$ times.
\end{proof}
Now, we determine the homotopy type of the subcomplexes. 
As before, the computations are in two parts depending on the conditions on $T$.

\begin{lemma}\label{lemm:p6i1p6i2}
Let $\chi$ be a characteristic function defined as in \eqref{eq:charfnp6I} and $T\subseteq [n]$. Then we have the following homotopy equivalences.
\begin{enumerate}
\item Suppose $\{n-1,n\}\subseteq T$. Then 
$K_{\chi, T}\simeq\begin{cases}
S^{|T|-1} & \text{$|T|$ is odd,}\\
S^{|T|-2} & \text{$|T|$ is even.}\\
\end{cases}$
\vspace{.2cm}
\item Suppose $\{n-1,n\}\nsubseteq T$. Then 
$K_{\chi, T}\simeq\begin{cases}
S^{|T|} & \text{$|T|$ is odd,}\\
S^{|T|-1} & \text{$|T|$ is even.}\\
\end{cases}$
\end{enumerate}
\end{lemma}
\begin{proof}
(1)
Suppose $\{n-1,n\}\subseteq T$. Assume that $|T|$ is odd. 
Note that for each $i\in T$ with $1\leq i\leq n$, $1$ occurs at the $i$-th and $(n+1+i)$-th position of $\chi_{T}$. 
Since $|T|$ is odd , $1$ occurs at $(n+1)$-st and $(2n+2)$-nd position of $\chi_{T}$ as well. 
Thus we have, 
\[\{n-1, n, n+1, 2n, 2n+1, 2n+2\}\subseteq \mathrm{supp}(\chi_{T}) .\]
Since the above set forms a vertex set of $P_{6}$, we have $P_{6}\subseteq K_{\chi,T}$.
The remaining vertices of $K_{\chi,T}$ are given by $\{i : i\in T\}\cup \{n+1+i : i\in T\}$
Note that $K\cong P_{6}\oplus (I^{n-2})^{\triangle}$. 
Observe that the above vertices are from the $(I^{n-2})^{\triangle}$ factor of $K$. 
Therefore,
$K_{\chi,T}\simeq \partial(P_{6}\oplus    \bigoplus_{i\in T\cap [n-2]} I_{i}),$ where $I_{i}=I$. 
Now it is clear that $K_{\chi,T}\simeq \partial(P_{6}\oplus (I^{|T|-2})^{\triangle})\simeq S^{|T|-1}.$

Now assume that $\{n-1,n\}\subseteq T$ and $|T|$ is even . 
Therefore, $1$ does not occur at the $(n+1)$-st and $(2n+2)$-nd position of vector $\chi_{T}$. 
This gives \[\{n+1,2n+2\}\nsubseteq \mathrm{supp}(\chi_{T}), \text{ and } \{n-1, n, 2n, 2n+1\}\subseteq \mathrm{supp}(\chi_{T})\] since $\{n-1,n\}\subseteq T$. 
Clearly, we have $P_{6}\cap K_{\chi,T}\simeq S^{0}.$
Now it is easy to see that $K_{\chi,T}\simeq \partial(I\oplus \bigoplus_{i=1}^{|T|-2}I_{i}),$ where $I_{i}=I$ for all $i$. 
Therefore, $K_{\chi,T}\simeq \partial((I^{|T|-1})^{\triangle})\simeq S^{|T|-2}.$ This proves Part (1).

(2)
Suppose $\{n-1,n\}\nsubseteq T$ Assume that $|T|$ is even . 
Therefore, $1$ does not occur at the $(n+1)$-st and $(2n+2)$-nd position of vector $\chi_{T}$. 
This gives \[\{n+1,2n+2\}\nsubseteq \mathrm{supp}(\chi_{T}) \text{ and } \{n-1, n, 2n, 2n+1\}\nsubseteq \mathrm{supp}(\chi_{T}).\]
Thus, $P_{6}\nsubseteq K_{\chi,T}$.
Since $T\subseteq [n-2]$, $K_{\chi,T}\simeq \partial(\oplus_{i=1}^{|T|}I_{i}),$ where $I_{i}=I$ for all $i$.
Therefore, $K_{\chi,T}\simeq \partial((I^{|T|})^{\triangle})\simeq S^{|T|-1}.$

Now assume that $\{n-1,n\}\nsubseteq T$ and $|T|$ is odd . 
Therefore, $1$ occurs at the $(n+1)$-st and $(2n+2)$-nd position of $\chi_{T}$.
Therefore, \[\{n+1,2n+2\}\subseteq \mathrm{supp}(\chi_{T}) \text{ and } \{n-1, n, 2n, 2n+1\}\nsubseteq \mathrm{supp}(\chi_{T}).\] 
Since $T\subseteq [n-2]$, $K_{\chi,T}\simeq \partial(I\oplus \bigoplus_{i=1}^{|T|}I_{i}),$ where $I_{i}=I$ for all $i$. 
Note that the first factor in the previous direct sum corresponds to $\{n+1, 2n+2\}$.
Therefore, $K_{\chi,T}\simeq \partial((I^{|T|+1})^{\triangle})\simeq S^{|T|}.$
This proves part (2).
\end{proof}

\begin{lemma}\label{P63}
Let $\chi$ be a characteristic function defined as in \eqref{eq:charfnp6I} and $T\subseteq [n]$. Then if the following conditions hold
\begin{enumerate}
    \item $n-1\notin T$ and $n\in T$ or
    \item $n-1\in T$ and $n\notin T$,
\end{enumerate}
 then we have $K_{\chi,T}\simeq S^{|T|-1}$.
\end{lemma}
\begin{proof}
Suppose $n-1\notin T$ and $n\in T$ with $|T|$ is odd . 
Therefore, the entry $1$ occurs at the $n$-th, $(2n+1)$-st, $(n+1)$-st and $(2n+2)$-nd position of $\chi_{T}$ but it doesn't occur at the $(n-1)$-st and the $(2n)$-th position.
This clearly gives \[\{n, n+1, 2n+1, 2n+2\}\subseteq \mathrm{supp}(\chi_{T}) \text{ and } \{n-1, 2n\}\nsubseteq \mathrm{supp}(\chi_{T}).\]
Since $T\setminus\{n\}\subseteq [n-2]$, $K_{\chi,T}\simeq \partial(I\oplus \bigoplus_{i=1}^{|T|-1}I_{i}),$ where $I_{i}=I$ for all $i$. 
Note that the first factor in the above direct sum corresponds to $\{n, 2n+1\}$. 
Therefore, $K_{\chi,T}\simeq \partial((I^{|T|})^{\triangle})\simeq S^{|T|-1}.$
Now suppose $n-1\notin T$ and $n\in T$ with $|T|$ is even .
Therefore, $1$  does not occur at the $(n-1)$-st, $(2n)$-th, $(n+1)$-st, $(2n+2)$-nd position of vector $\chi_{T}$ but occurs at the $n$-th and $(2n+1)$-st position. 
In particular, we have \[\{n-1, 2n, n+1, 2n+2\}\nsubseteq \mathrm{supp}(\chi_{T}) \text{ and } \{n, 2n+1\}\subseteq \mathrm{supp}(\chi_{T}).\]
Since $T\setminus\{n\}\subseteq [n-2]$, $K_{\chi,T}\simeq \partial(I\oplus_{i=1}^{|T|-1}I_{i}),$ where $I_{i}=I$ for all $i$. 
Note that the first factor in the above direct sum is corresponding to $\{n, 2n+1\}$. 
Therefore, $K_{\chi,T}\simeq \partial((I^{|T|})^{\triangle})\simeq S^{|T|-1}.$
This proves the lemma in the first case. 
Similar steps can be followed to prove the second case.
\end{proof}

\begin{theorem}\label{thm:QbettiP6I}
Let $\alpha$ be a length vector whose genetic code is $\la\{1,2,\dots,n-2,n+1,n+3\}\ra$ and let $\beta_{i}(\mbalpha, \Q)$ be the $i$-th rational Betti number of the corresponding
polygon space. Then 
\[\beta_{i}(\mbalpha,\Q) = \begin{cases}
3\binom{n-2}{i-1}+\binom{n-2}{i} & \text{if $i$ is even,}\\
3\binom{n-2}{i-1}+\binom{n-2}{i-2} & \text{if $i$ is odd}.\\
\end{cases}\]
\end{theorem}
\begin{proof}
Let $i$ be odd.
Then from \Cref{lemm:p6i1p6i2} and \Cref{P63} we have 
$\Tilde{H}_{i-1}(K_{\chi, T}, \mathbb{Q}) \cong \mathbb{Q}$ if the following conditions holds :
\begin{enumerate}
    \item If $|T|=i$  with $\{n-1,n\}\subseteq T$.
    \item If $|T|=i+1$  with $\{n-1,n\}\subseteq T$.
    \item If $|T|=i$ with $n-1\notin T$ and $n\in T$.
    \item If $|T|=i$ with $n-1\in T$ and $n\notin T$. 
\end{enumerate}
We use the Suciu-Trevisan formula to compute the rational Betti numbers of $X(P_{6}\times I^{n-2}, \chi)\cong \mbalpha$.
Note that the cardinality of type $(1)$ sets is $\binom{n-2}{i-2}$ and the cardinalities of type $(2)$, type $(3)$ and type $(4)$ sets are same and it is equal to $\binom{n-2}{i-1}$ in each case. Now theorem follows by adding these cardinalities.

Now, suppose $i$ is even. Then again we can use \Cref{lemm:p6i1p6i2} and \Cref{P63} to get the $(i-1)$-st reduced rational homology of $K_{\chi,T}$. 
We have 
$\Tilde{H}_{i-1}(K_{\chi, T}, \mathbb{Q}) \cong \mathbb{Q}$ if the following conditions holds :
\begin{enumerate}
    \item If $|T|=i$  with $\{n-1,n\}\nsubseteq T$. 
    \item If $|T|=i-1$  with $\{n-1,n\}\nsubseteq T$.
    \item If $|T|=i$ with $n-1\notin T$ and $n\in T$.
    \item If $|T|=i$ with $n-1\in T$ and $n\notin T$. 
\end{enumerate}
Note that the cardinality of type $(1)$ sets is $\binom{n-2}{i}$ and the cardinalities of type $(2)$, type $(3)$, type $(4)$ sets are same and it is equal to $\binom{n-2}{i-1}$ in each case. 
This proves the theorem. 
\end{proof}

\begin{remark}
Note that
    $\sum_{i=1}^{n}\beta_{i}(K_{n},\mathbb{Q})=4\cdot2^{n-2}.$
\end{remark}






\section{Declarations}
\subsection*{Ethical Approval}
This declaration is not applicable.
\subsection*{Consent to participate}
This declaration is not applicable.
\subsection*{Consent to publish}
This declaration is not applicable.
\subsection*{Competing interests}
The authors declare that they have no competing interests. 
\subsection*{Authors contributions}
All authors contributed equally.
\subsection*{Funding}
The first author was funded by NBHM through the grant 0204/10/(16)/2023/R\&D-II/2789. The second author was partially funded by the Infosys Foundation during this work.

\subsection*{Availability of data and materials}
Data sharing is not applicable to this article as no data sets were generated or analyzed during the current study.

\bibliographystyle{plain} 
\bibliography{references}
\end{document}